\pgfplotsset{compat=newest}
\pgfplotsset{plot coordinates/math parser=false}
\newlength{\figwidth}
\newlength{\figheight}
\definecolor{gray1}{gray}{0.0}
\definecolor{gray2}{gray}{0.25}
\definecolor{gray3}{gray}{0.5}
\definecolor{gray4}{gray}{0.7}
\definecolor{gray5}{gray}{0.9}
\newlength{\prel}\setlength{\prel}{0.1cm} % For pre and post offset of brackets
\pgfplotsset{
  % Title font
  title style = {font=\small},
}
\newcommand{\T}{\textsf{T}} % Transpose
\renewcommand{\b}[1]{\pmb{#1}} % \b seems to be underlining by default, we don't really need that
\newcommand{\textsm}[1]{\text{{\tiny #1}}} % Small text for sub- and superscripts
\DeclareMathOperator*{\argmin}{arg\,min}  % Argmin
\DeclarePairedDelimiterX\Set[2]{\lbrace}{\rbrace}%
{ #1 \,:\, #2 } % Set of type {A : B}
\newtheorem{assumption}[theorem]{Assumption}
\newcommand{\rev}[1]{#1}
\begin{document}

\title{On the Positivity and Magnitudes of Bayesian Quadrature Weights}
%\subtitle{Do you have a subtitle?\\ If so, write it here}
%\titlerunning{Short form of title}        % if too long for running head

\author{Toni Karvonen \and Motonobu Kanagawa \and Simo S\"{a}rkk\"{a}}
%\authorrunning{Short form of author list} % if too long for running head

\institute{T. Karvonen \at
              Department of Electrical Engineering and Automation \\
              Aalto University, Finland \\
              \email{toni.karvonen@aalto.fi}
            \and
            M. Kanagawa \at
            University of Tübingen and Max Planck Institute for Intelligent Systems, Germany \\
            \email{motonobu.kanagawa@gmail.com}            
           \and
           S. S\"{a}rkk\"{a} \at
           Department of Electrical Engineering and Automation \\
          Aalto University, Finland \\
          \email{simo.sarkka@aalto.fi}
}

\date{}
% The correct dates will be entered by the editor

\maketitle

\begin{abstract}
This article reviews and studies the properties of Bayesian quadrature weights, which strongly affect stability and robustness of the quadrature rule.
Specifically, we investigate conditions that are needed to guarantee that the weights are positive or to bound their magnitudes.
First, it is shown that the weights are positive in the univariate case if the design points locally minimise the posterior integral variance and the covariance kernel is totally positive (e.g., Gaussian and Hardy kernels).
This suggests that gradient-based optimisation of design points may be effective in constructing stable and robust Bayesian quadrature rules.
Secondly, we show that magnitudes of the weights admit an upper bound in terms of the fill distance and separation radius if the RKHS of the kernel is a Sobolev space (e.g., Mat\'ern  kernels), suggesting that quasi-uniform points should be used.
A number of numerical examples demonstrate that significant generalisations and improvements appear to be possible, manifesting the need for further research.

\keywords{Bayesian quadrature \and probabilistic numerics \and Gaussian processes \and Chebyshev systems \and stability} % any more?
% \PACS{PACS code1 \and PACS code2 \and more}
% \subclass{MSC code1 \and MSC code2 \and more}
\end{abstract}

\section{Introduction}

This article is concerned with \emph{Bayesian quadrature}~\cite{OHagan1991,RasmussenGhahramani2002,Briol2017}, a probabilistic approach to numerical integration and an example of a \emph{probabilistic numerical method} \cite{Larkin1972,Hennig2015,Cockayne2017}.
Let $\Omega$ be a subset of  $\mathbb{R}^d$, $d \geq 1$, and $\nu$ a Borel probability measure on $\Omega$. Given an \emph{integrand} $f \colon \Omega \to \mathbb{R}$, the task is to approximate the integral
\begin{equation*}
I_\nu(f) \coloneqq \int_\Omega f \dif \nu,
\end{equation*}
the solution of which is assumed not to be available in closed form.
In Bayesian quadrature, a user specifies a prior distribution over the integrand as a Gaussian process $f_\textsm{GP} \sim \mathcal{GP}(0,k)$ by choosing a positive-definite covariance kernel $k \colon \Omega \times \Omega \to \mathbb{R}$, so as to faithfully represent their knowledge about the integrand, such as its smoothness.
The user then evaluates the true integrand at chosen design points $X = \{\b{x}_1, \dots, \b{x}_n\} \subset \Omega$.
By regarding the pairs $\mathcal{D} \coloneqq \{(\b{x}_i, f(\b{x}_i))\}_{i=1}^n$ thus obtained as ``observed data'', the posterior distribution $I_\nu(f_\textsm{GP}) \mid \mathcal{D}$ becomes a Gaussian random variable.
This posterior distribution is useful for uncertainty quantification and decision making in subsequent tasks; this is one factor that makes Bayesian quadrature a promising approach in modern scientific computation, where quantification of discretisation errors is of great importance \cite{Briol2017,Oates2017}.

In Bayesian quadrature, the mean of the posterior over the integral is used as a quadrature estimate. The mean given is as a weighted average of function values:
\begin{equation*}%\label{eq:bq-intro}
\mathbb{E}\big[ I_\nu(f_\textsm{GP}) \mid \mathcal{D} \big] = \sum_{i=1}^n w_{X,i}^\textsm{BQ} f(\b{x}_i) \approx \int_\Omega f \dif \nu,
\end{equation*}
where $w_{X,1}^\textsm{BQ}, \dots, w_{X,n}^\textsm{BQ} \in \mathbb{R}$ are the weights computed with the kernel $k$, design points $X$ and the measure $\nu$ (see Section \ref{sec:bq-basics} for details).
This form is similar to (quasi) Monte Carlo methods, where $\b{x}_1,\dots,\b{x}_n$ are (quasi) random points from a suitable proposal distribution and $w_1,\dots,w_n$ are the associated importance weights, positive by definition.
This similarity naturally leads to the following question: Are the weights $w_{X,1}^\textsm{BQ},\dots, w_{X,n}^\textsm{BQ}$ of Bayesian quadrature positive?
These weights are derived with no explicit positivity constraint, so in general some of them can be negative, \rev{which is observed in~\cite[Section 3.1.1]{HuszarDuvenaud2012}.}
Therefore, the question can be stated as: \emph{Under which conditions on the points and the kernel are the weights guaranteed to be positive?}

This question is important both conceptually and practically.
On the conceptual side, positive weights are more natural, given that the weighted sample $(w_i,\b{x}_i)_{i=1}^n$ can be interpreted as an approximation of the positive probability measure $\nu$; in fact, the Bayesian quadrature weights provide the best approximation of the representer of $\nu$ in the reproducing kernel Hilbert space (RKHS) of the covariance kernel, provided that $\b{x}_1,\dots,\b{x}_n$ are fixed (see Section \ref{sec:RKHS}).
Thus, if the weights are positive, then each weight $w_i$ can be interpreted as representing the ``importance'' of the associated point $\b{x}_i$ for approximating $\nu$.
This interpretation may be more acceptable to users familiar with Monte Carlo methods, encouraging them to adopt Bayesian quadrature.

On the practical side, quadrature rules with positive weights enjoy the advantage of being numerically more stable against errors in integrand evaluations.
In fact, besides Monte Carlo methods, many other practically successful or in some sense optimal rules have positive weights. Some important examples include Gaussian~\rev{\cite[Section~1.4.2]{Gautschi2004}} and
Clenshaw--Curtis quadrature~\rev{\cite{ClenshawCurtis1960}} and their tensor product extensions. \rev{Other domains besides subsets of $\mathbb{R}^d$ have also received their share of attention. For instance, positive-weight rules on the sphere are constructed in~\cite{Mhaskar2001} and interesting results connecting fill-distance and positivity of the weights of quadrature rules on compact Riemannian manifolds appear in~\cite{Breger2018}.}
It is also known that, in some typical function classes, such as Sobolev spaces, optimal rates of convergence can be achieved by considering only positive-weight quadrature rules; see for instance~\cite[Section 1]{Novak1999} and references therein. 
Therefore, if one can find conditions under which Bayesian quadrature weights are positive, then these conditions may be used as guidelines in construction of numerically stable Bayesian quadrature rules.

This article reviews existing, and derives new, results on properties of the Bayesian quadrature weights, focusing in particular on their \emph{positivity} and \emph{magnitude}.
One of our principal aims is to stimulate new research on quadrature weights in the context of probabilistic numerics.
While convergence rates of Bayesian quadrature rules have been studied extensively in recent years~\cite{Briol2017,Kanagawa2016,Kanagawa2018}, analysis of the weights themselves has not attracted much attention.
On the other hand, the earliest work~\cite{Larkin1970,RichterDyn1971a,BarrarLoeb1976} (see \cite{Oettershagen2017} for a recent review) done in the 1970s on kernel-based quadrature  already revealed certain interesting properties of the Bayesian quadrature weights. These results seem not well-known in the statistics and machine learning community.
Moreover, there are some useful results from the literature on scattered data approximation~\cite{DeMarchiSchaback2010}, which can be used to analyse the properties of Bayesian quadrature weights.
The basics of Bayesian quadrature are reviewed in Section \ref{sec:bq} while the main contents, including simulation results, of the article are presented in Sections \ref{sec:positivity} and \ref{sec:stability}.

In Section \ref{sec:positivity}, we present results concerning positivity of the Bayesian quadrature weights.
We discuss results on the number of the weights that must be positive, focusing on the univariate case and \emph{totally positive} kernels (Definition~\ref{def:totally-positive}).
Corollary~\ref{cor:optimal-weights}, the main result of this section, states that all the weights are positive if the design points are locally optimal.
A practically relevant consequence of this result is that it may imply that the weights are positive if the design points are obtained by gradient descent, which is guaranteed to provide locally optimal points (see e.g.~\cite{LeeSimJorRec16}).

Section \ref{sec:stability} focuses on results on the magnitudes of the weights.
More specifically, we discuss the behaviour of the sum of absolute weights, $\sum_{i=1}^n \abs[0]{w_{X,i}^\textsm{BQ}}$, that strongly affects stability and robustness of Bayesian quadrature.
If this quantity is small, the quadrature rule is robust against misspecification of the Gaussian process prior \cite{Kanagawa2018} and errors in integrand evaluations~\cite{Forster1993} \rev{and kernel means~\cite[pp.\ 298--300]{SommarivaVianello2006}}.
This quantity is also related to the numerical stability of the quadrature rule.
Using a result on stability of kernel interpolants by De Marchi and Schaback~\cite{DeMarchiSchaback2010}, we derive an upper bound on the the sum of absolute weights for some typical cases where the Gaussian process has finite degree of smoothness and the RKHS induced by the covariance kernel is norm-equivalent to a Sobolev space.

\section{Bayesian quadrature} \label{sec:bq}

This section defines a Bayesian quadrature rule as the integral of the posterior of Gaussian process used to model the integrand.
We also discuss the equivalent characterisation of this quadrature rule as the worst-case optimal integration rule in the RKHS $\mathcal{H}(k)$ induced by the covariance kernel $k$ of the Gaussian process.

\subsection{Basics of Bayesian quadrature} \label{sec:bq-basics}

In standard Bayesian quadrature~\cite{OHagan1991,Minka2000,Briol2017}, the deterministic integrand $f \colon \Omega \to \mathbb{R}$ is modelled as a Gaussian process. The integrand is assigned a zero-mean Gaussian process prior $f_\textsm{GP} \sim \mathcal{GP}(0, k)$ with a positive-definite covariance kernel $k$. This is to say that for any $n \in \mathbb{N}$ and any distinct points $X = \{\b{x}_1, \ldots, \b{x}_n \} \subset \Omega$ we have $(f_\textsm{GP}(\b{x}_1),\ldots,f_\textsm{GP}(\b{x}_n)) \sim \mathrm{N}(\b{0}, \b{K}_X)$, with
\begin{equation*}
    [\b{K}_X]_{ij} \coloneqq \mathrm{Cov}\big[f_\textsm{GP}(\b{x}_j), f_\textsm{GP}(\b{x}_i) \big] = k(\b{x}_j, \b{x}_i)
\end{equation*}
the $n \times n$ positive-definite (and hence invertible) \emph{kernel matrix}.
Conditioning on the \emph{data} $\mathcal{D} =\{(\b{x}_i,f(\b{x}_i))\}_{i=1}^n$, consisting of evaluations $\b{f}_{X} \coloneqq (f(\b{x}_i))_{i=1}^n \in \mathbb{R}^n$ of $f$ at points $X$, yields a Gaussian posterior process with the mean
\begin{equation} \label{eq:GP-posterior-mean}
\begin{split}
\mu_{X,f}(\b{x}) &\coloneqq \mathbb{E} \big[ f_\textsm{GP}(\b{x}) \mid \mathcal{D} \big] \\
&= \b{k}_X(\b{x})^\T \b{K}_X^{-1} \b{f}_X
\end{split}
\end{equation}
and covariance
\begin{equation*}
\begin{split}
\sigma^2_X(\b{x},\b{x}') &\coloneqq \mathrm{Cov}\big[f_\textsm{GP}(\b{x}), f_\textsm{GP}(\b{x}') \mid \mathcal{D} \big] \\
&= k(\b{x},\b{x}') - \b{k}_X(\b{x})^\T \b{K}_X^{-1} \b{k}_X(\b{x}'),
\end{split}
\end{equation*}
where the $n$-vector $\b{k}_X(\b{x})$ has the elements $[\b{k}_X(\b{x})]_i = k(\b{x},\b{x}_i)$.
Note that the posterior covariance only depends on the points, not on the integrand, and that the posterior mean \emph{interpolates} the data (i.e., $\mu_{X,f}(\b{x_i}) = f(\b{x}_i)$ for $i=1,\ldots,n$). Accordingly, the posterior mean often goes by the name \emph{kernel interpolant} or, if the kernel is isotropic, \emph{radial basis function interpolant}.

Due to the linearity of the integration operator, the posterior of the integral becomes a Gaussian distribution $I(f_\textsm{GP}) \mid \mathcal{D} \sim \mathcal{N}(I_X^\textsm{BQ}(f), \mathbb{V}_X^\textsm{BQ})$ with the mean and variance
\begin{align}
\begin{split}\nonumber
I_X^\textsm{BQ}(f) &\coloneqq \mathbb{E}\big[I_\nu(f_\textsm{GP}) \mid \mathcal{D}\big] \\
&= \int_\Omega \mathbb{E}\big[f_\textsm{GP}(\b{x}) \mid \mathcal{D}\big] \dif \nu(\b{x}) \\
&= \b{k}_{\nu,X}^\T \b{K}^{-1}_X \b{f}_X, 
\end{split} \\
\begin{split}\label{eq:bq-variance}
\mathbb{V}_X^\textsm{BQ} &\coloneqq \text{Var}[I(f_\textsm{GP}) \mid \mathcal{D}] \\
&= \int_\Omega \int_\Omega \mathrm{Cov}\big[f_\textsm{GP}(\b{x}), f_\textsm{GP}(\b{x}') \mid \mathcal{D} \big] \dif \nu(\b{x}) \dif \nu(\b{x}') \\
&= I_\nu(k_\nu) - \b{k}_{\nu,X}^\T \b{K}_X^{-1} \b{k}_{\nu,X}, 
\end{split}
\end{align}
where $k_\nu(\b{x}) \coloneqq \int_\Omega k(\cdot, \b{x}) \dif \nu(\b{x})$ is the \emph{kernel mean}~\cite{Smola2007}, $\b{k}_{\nu,X} \in \mathbb{R}^n$ with $[\b{k}_{\nu,X}]_i = k_\nu(\b{x}_i)$ and
\begin{equation*}
I_\nu(k_\nu) = \int_\Omega k_\nu(\b{x}) \dif \nu(x) = \int_\Omega \int_\Omega k(\b{x}, \b{x}') \dif \nu (\b{x}') \dif \nu(\b{x}).
\end{equation*}
The integral mean $I_X^\textsm{BQ}(f)$ is used to approximate the true intractable integral $I_\nu(f)$ while the variance $\mathbb{V}_X^\textsm{BQ}$ is supposed to quantify epistemic uncertainty, due to partial information being used (i.e., a finite number of function evaluations) inherent to this approximation.

The integral mean $I_X^\textsm{BQ}(f)$ indeed takes the form a quadrature rule, a weighted sum of function evaluations:
\begin{equation*}
    I^\textsm{BQ}_X(f) = (\b{w}_X^\textsm{BQ})^\T \b{f}_X = \sum_{i=1}^n w_{X,i}^\textsm{BQ} f(\b{x}_i),
\end{equation*}
where $w_{X,1}^\textsm{BQ}, \dots, w_{X,n}^\textsm{BQ}$  are the \emph{Bayesian quadrature weights} given by
\begin{equation}\label{eq:BQ-weights}
    \b{w}_X^\textsm{BQ} \coloneqq ( w_{X,i}^\textsm{BQ} )_{i=1}^n \coloneqq \b{K}_{X}^{-1} \b{k}_{\nu,X} \in \mathbb{R}^n.
\end{equation}
The purpose of this article is to analyse the properties of these weights.

A particular property of a Bayesian quadrature rule is that the $n$ kernel translates $k_{\b{x}_i} \coloneqq k(\cdot, \b{x}_i)$ are integrated exactly:
\begin{equation} \label{eq:BQ-exactness}
    I_X^\textsm{BQ}(k_{\b{x}_i}) = I_\nu(k_{\b{x}_i}) = k_\nu(\b{x}_i) \: \text{ for each } \: i = 1,\ldots,n,
\end{equation}
which is derived from the fact that the $j$th equation of the linear system $\b{K}_X \b{w}_X^\textsm{BQ} = \b{k}_{\nu,X}$ defining the weights is
\begin{equation*}
    \sum_{i=1}^n k(\b{x}_j, \b{x}_i) w_{X,i}^\textsm{BQ} = k_\nu(\b{x}_j).
\end{equation*}
The left-hand side is precisely $I_X^\textsm{BQ}(k_{\b{x}_j})$ while on the right-hand side we have $k_\nu(\b{x}_j) = I_\nu(k_{\b{x}_j})$.
Note also that the integral variance is the integration error of the kernel mean:
\begin{equation*}
\begin{split}
    \mathbb{V}_X^\textsm{BQ} &= I_\nu(k_\nu) - \b{k}_{\nu,X}^\T \b{K}_X^{-1} \b{k}_{\nu,X} \\
    &= I_\nu(k_\nu) - (\b{w}_X^\textsm{BQ})^\T \b{k}_{\nu,X} \\
    &= I_\nu(k_\nu) - I_X^\textsm{BQ}(k_\nu).
\end{split} 
\end{equation*}

Occasionally, it is instructive to interpret the weights as integrals of the \emph{Lagrange cardinal functions} \sloppy{${\b{u}_X = (u_{X,i})_{i=1}^n}$} (see e.g. \cite[Chapter 11]{Wendland2005}). 
These functions are defined as $\b{u}_X(\b{x}) = \b{K}_X^{-1} \b{k}_X(\b{x})$, from which it follows that
\begin{equation}\label{eq:lagrange-form}
\mu_{X,f}(\b{x}) = \b{u}_X(\b{x})^\T \b{f}_X = \sum_{i=1}^n f(\b{x}_i) u_{X,i}(\b{x}).
\end{equation}
Consequently, the cardinality property 
\begin{equation*}
u_{X,i}(\b{x}_j) = \delta_{ij} \coloneqq \begin{cases} 1 \: \text{ if } \: i=j, \\ 0 \: \text{ if } \: i \neq j. \end{cases}
\end{equation*}
is satisfied, as can be verified by considering the interpolant $\mu_{X,g_i}$ to any function $g_i$ such that $g_i(\b{x}_j) = \delta_{ij}$. Since the integral mean is merely the integral of $\mu_{X,f}$, we have from \eqref{eq:lagrange-form} that
\begin{equation*}
I_X^\textsm{BQ}(f) = \sum_{i=1}^n f(\b{x}_i) I_\nu(u_{X,i}).
\end{equation*}
That is, the $i$th Bayesian quadrature weight is the integral of the $i$th Lagrange cardinal function: $w_{X,i}^\textsm{BQ} = I_\nu(u_{X,i})$.

\subsection{Reproducing kernel Hilbert spaces} \label{sec:RKHS}

An alternative interpretation of Bayesian quadrature weights is that they are, for the given points, the worst-case optimal weights in the reproducing kernel Hilbert space $\mathcal{H}(k)$ induced by the covariance kernel $k$. The material of this section is contained in, for example,~\cite[Section 2]{Briol2017}, \cite[Section 3.2]{Oettershagen2017} and \cite[Section 2]{Karvonen2018}. For a comprehensive introduction to RKHSs, see the monograph of Berlinet and Thomas-Agnan~\cite{Berlinet2011}.

The RKHS induced by $k$ is the unique Hilbert space of functions characterised by (i) the \emph{reproducing property} $\langle k_{\b{x}}, f \rangle_{\mathcal{H}(k)} = f(\b{x})$ for every $f \in \mathcal{H}(k)$ and $\b{x} \in \Omega$ and (ii) the fact that $k_{\b{x}} \in \mathcal{H}(k)$ for every $\b{x} \in \Omega$. 
The \emph{worst-case error} in $\mathcal{H}(k)$ of a quadrature rule with points $X$ and weights $\b{w} \in \mathbb{R}^n$ is
\begin{equation*}
\begin{split}
e_{\mathcal{H}(k)}(X, \b{w})^2 &\coloneqq \sup_{\lVert f \rVert_{\mathcal{H}(k)} \leq 1} \Bigg\lvert \int_\Omega f \dif \nu - \sum_{i=1}^n w_i f(\b{x}_i) \Bigg\rvert \\
&= I_\nu(k_\nu) - 2 \b{w}^\T \b{k}_{\nu,X} + \b{w}^\T \b{K}_X \b{w}.
\end{split}
\end{equation*}
It can be then shown that the Bayesian quadrature weights $\b{w}_X^\textsm{BQ}$ are the unique minimiser of the worst-case error among all possible weights for these points: 
\begin{equation*}
\b{w}_X^\textsm{BQ} = \argmin_{\b{w} \in \mathbb{R}^n} e_{\mathcal{H}(k)}(X, \b{w})
\end{equation*}
and
\begin{equation} \label{eq:variance-is-wce}
\mathbb{V}_X^\textsm{BQ} = e_{\mathcal{H}(k)}(X, \b{w}_X^\textsm{BQ})^2.
\end{equation}
Furthermore, the worst-case error can be written as the RKHS error in approximating the integration representer $k_\nu$ that satisfies $I_\nu(f) = \langle k_\nu , f \rangle_{\mathcal{H}(k)}$ for all $f \in \mathcal{H}(k)$: 
\begin{equation*}
e_{\mathcal{H}(k)}(X, \b{w}_X^\textsm{BQ}) = \norm[0]{k_\nu - k_Q}_{\mathcal{H}(k)}, \quad k_Q \coloneqq \sum_{i=1}^n w_{X,i}^\textsm{BQ} k_{\b{x}_i}.
\end{equation*}
From this representation and the Cauchy--Schwarz inequality it follows that
\begin{equation*}
\begin{split}
\abs[0]{I_\nu(f) - I_X^\textsm{BQ}(f)} &= \abs[1]{ \langle k_\nu - k_Q , f \rangle_{\mathcal{H}(k)} } \\ 
&\leq \norm[0]{f}_{\mathcal{H}(k)} \norm[0]{k_\nu - k_Q}_{\mathcal{H}(k)} \\
&= \norm[0]{f}_{\mathcal{H}(k)} e_{\mathcal{H}(k)}(X, \b{w}_X^\textsm{BQ}).
\end{split}
\end{equation*}
For analysis of convergence of Bayesian quadrature rules as $n \to \infty$ it is therefore sufficient to analyse how the worst-case error (i.e., integral variance) behaves---as long as the integrand indeed lives in $\mathcal{H}(k)$. Convergence will be discussed in Section \ref{sec:stability}.

%%%%%
%%  POSITIVITY OF WEIGHTS
%%%%%

\section{Positivity} \label{sec:positivity}

This section reviews existing results on the positivity of the weights of Bayesian quadrature that can be derived in one dimension when the covariance kernel is \emph{totally positive}. This assumption, given in Definition \ref{def:totally-positive}, is stronger than positive-definiteness but is satisfied by, for example, the Gaussian kernel. For most of the section we assume that $d = 1$ and $\Omega = [a,b]$ for $a < b$. Furthermore, the measure $\nu$ is typically assumed to admit a density function with respect to the Lebesgue measure,\footnote{This can be usually relaxed to $I_\nu$ being a positive linear functional: $I_\nu(f) > 0$ whenever $f$ is almost everywhere positive.} an assumption that implies $I_\nu(f) > 0$ if $f(x) > 0$ for almost every $x \in \Omega$.

Positivity of the weights was actively investigated during the 1970s~\cite{Richter1970,RichterDyn1971a,RichterDyn1971b,BarrarLoebWerner1974,BarrarLoeb1976}, and these results have been recently refined and collected by Oettershagen~\cite[Section 4]{Oettershagen2017}.
To simplify presentation, some of the results in this section are given in a slightly less general form than possible.
Two of the most important results are
\begin{itemize}
\item \textbf{Theorem \ref{thm:BQ-positivity-general}}: At least one half of the weights of \emph{any} Bayesian quadrature rule are positive (Theorem \ref{thm:BQ-positivity-general}).
\item \textbf{Corollary \ref{cor:optimal-weights}}: All the weights are positive when the points are selected so that the integral posterior variance in~\eqref{eq:bq-variance} is locally minimised in the sense that each of its $n$ partial derivatives with respect to the integration points vanishes (Definition~\ref{def:bq-optimal}).
\end{itemize}
The latter of these results is particularly interesting since (i) it implies that points selected using a gradient descent algorithm may have positive weights and (ii) the resulting Bayesian quadrature rule is a positive linear functional and hence potentially well-suited for integration of functions that are known to be positive---a problem for which a number of transformation-based methods have been developed recently~\cite{Osborne2012,Gunter2014,Chai2018}.

As no multivariate extension of the theory used to prove the aforementioned results appears to have been developed, we do not provide any general theoretical results on the weights in higher dimensions. 
However, some special cases based on, for example, tensor products are discussed in Sections \ref{sec:other-positive} and \ref{sec:multi-positive} and two numerical examples are used to provide some evidence for the conjectures that multivariate versions of Theorem \ref{thm:BQ-positivity-general} and Corollary \ref{cor:optimal-weights} hold.

It will turn out that optimal Bayesian quadrature rules are analogous to classical Gaussian quadrature rules in the sense that, in addition to being exact for kernel interpolants (recall \eqref{eq:BQ-exactness}), they also exactly integrate Hermite interpolants (see Section \ref{sec:hermite-interpolation}). 
We thus begin by reviewing the argument used to establish positivity of the Gaussian quadrature weights. 

\subsection{Gaussian quadrature} \label{sec:gauss-quadrature}

Under the assumption that $\nu$ admits a density\footnote{This can be generalised to the cumulative distribution function having infinitely many points of increase.} there exist unique weights $w_1,\ldots,w_n$ and points $x_1,\ldots,x_n \in [a,b]$ such that
\begin{equation} \label{eq:Gaussian-quadrature}
    \sum_{i=1}^n w_i P(x_i) = \int_a^b P(x) \dif \nu(x)
\end{equation}
for every polynomial $P$ of degree at most $2n-1$~\cite[Chapter 1]{Gautschi2004}. This quadrature rule is known as a \emph{Gaussian quadrature rule} (for the measure $\nu$). 
One can show the positivity of the weights of a Gaussian rule as follows.

\begin{proposition}
Assume that $\nu$ admits a Lebesgue density.
Then the weights $w_1,\dots,w_n$ of the Gaussian quadrature \eqref{eq:Gaussian-quadrature} are positive.
\end{proposition}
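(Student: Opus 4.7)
The plan is to exhibit, for each index $i$, a nonnegative polynomial of degree at most $2n-1$ that takes value zero at every node except $x_i$, and to exploit exactness of the Gaussian rule on such polynomials.

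Concretely, for a fixed $i \in \{1,\dots,n\}$ I would define the Lagrange cardinal polynomial
\begin{equation*}
\ell_i(x) \coloneqq \prod_{j \neq i} \frac{x - x_j}{x_i - x_j},
\end{equation*}
which has degree $n-1$ and satisfies $\ell_i(x_j) = \delta_{ij}$. Then the squared polynomial $L_i \coloneqq \ell_i^2$ has degree $2(n-1) \leq 2n-1$, so by the defining exactness property \eqref{eq:Gaussian-quadrature} of the Gaussian rule I obtain
\begin{equation*}
\sum_{j=1}^n w_j L_i(x_j) = \int_a^b \ell_i(x)^2 \dif \nu(x).
\end{equation*}
The left-hand side collapses to $w_i \cdot \ell_i(x_i)^2 = w_i$ by the cardinality property, while the right-hand side is strictly positive: $\ell_i^2$ is a nonnegative polynomial that is not identically zero (it equals $1$ at $x_i$, and polynomials are continuous), and the assumption that $\nu$ admits a Lebesgue density makes $I_\nu$ a strictly positive linear functional on nonnegative, not-almost-everywhere-zero functions. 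Therefore $w_i > 0$, and since $i$ was arbitrary, all the weights are positive.

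There is essentially no obstacle; the only subtlety worth flagging in the write-up is the justification that $\int_a^b \ell_i^2 \dif\nu > 0$, which relies precisely on the footnoted interpretation of the density assumption as meaning that $I_\nu$ is a positive linear functional. The degree count $2(n-1) \leq 2n-1$ is what makes the argument work, and this is the feature that distinguishes Gaussian quadrature from interpolatory rules of lower algebraic degree of exactness, for which the analogous argument fails.
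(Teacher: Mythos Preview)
Your proof is correct and essentially identical to the paper's own argument: the paper also takes the Lagrange cardinal polynomial (which it calls $L_i$), squares it to obtain a nonnegative polynomial of degree $2n-2$ integrated exactly by the Gaussian rule, and concludes $w_i = I_\nu(L_i^2) > 0$ from the density assumption. The only difference is notation.
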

\begin{proof}
For each $i=1,\ldots,n$ there exists a unique polynomial $L_i$ of degree $n-1$ such that $L_i(x_j) = \delta_{ij}$. 
This property is shared by the function $G_i \coloneqq L_i^2 \geq 0$ that, being of degree $2n-2$, is also integrated exactly by the Gaussian rule. Because $G_i$ is almost everywhere positive, it follows from the assumption that $\nu$ admits a density that
\begin{equation*}
    0 < \int_a^b G_i(x) \dif \nu(x) = \sum_{i=1}^n w_j G_i(x_j) = w_i.
\end{equation*}
The positivity of the weights is thus concluded. \qed
\end{proof}
This proof may appear to be based on the closedness of the set of polynomials under exponentiation. 
Closer analysis reveals a structure that can be later generalised.

To describe this, recall that one of the basic properties of polynomials is that a polynomial $P$ of degree $n$ can have at most $n$ zeroes, when counting multiplicities (for some properties of polynomials and interpolation with them, see e.g. \cite[Chapter 3]{Atkinson1989}). 
This is to say that, if for some points $x_1,\ldots,x_m$ it holds that
\begin{equation*}
    P^{(j_i)}(x_i) \coloneqq \frac{\mathrm{d}^{j_i}}{\dif x^{j_i}} P(x) \Bigr|_{x = x_i} = 0
\end{equation*}
for $j_i = 0,\ldots,q_i-1$, with $q_i$ being the \emph{multiplicity} of the zero $x_i$ of $P$, then $\sum_{i=1}^m q_i \leq n$. 
This fact on zeroes of polynomials can be used to supply a proof of positivity of the Gaussian quadrature weights that does not explicitly use of the fact that square of a function is non-negative.
By the chain rule, the derivative of $G_i$ vanishes at each $x_j$ such that $j \neq i$.
That is, $G_i$ has a double zero at each of these $n-1$ points (i.e., $G_i(x_j) = 0$ and $G_i^{(1)}(x_j) = 0$), for the total of $2n-2$ zeroes.
Being a polynomial of degree $2n-2$, $G_i$ cannot have any other zeroes besides these. 
Since all the zeroes of $G_i$ are double, it cannot hence have any sign changes.
This is because, in general, a function $g$ that satisfies $g(x) = g^{(1)}(x) = 0$ but $g^{(2)}(x) \neq 0$ at a point $x$ cannot change its sign at $x$, since its derivative changes sign at the point. 
From $G_i(x_i) = 1 > 0$ it then follows that $G_i$ is almost everywhere positive.

\subsection{Chebyshev systems and generalised Gaussian quadrature} \label{sec:gen-gauss-quadrature}

The argument presented above works almost as such when the polynomials are replaced with generalised polynomials and the Gaussian quadrature rule with a generalised Gaussian quadrature rule. 
Much of the following material is covered by the introductory chapters of the monograph by Karlin and Studden~\cite{KarlinStudden1966}.
In the following $C^m([a,b])$ stands for the set of functions that are $m$ times continuously differentiable on the open interval $(a,b)$.

\begin{definition}[Chebyshev system] A collection of functions $\{\phi_i\}_{i=1}^m \subset C^{m-1}([a,b])$ constitutes an (extended) \emph{Chebyshev system} if any non-trivial linear combination of the functions, called a \emph{generalised polynomial}, has at most $m-1$ zeroes, counting multiplicities.
\end{definition}

\begin{remark} \label{remark:chebyshev} Some of the results we later present, such as Proposition \ref{prop:weight-positivity}, are valid even when a less restrictive definition, that does not require differentiability of $\phi_i$, of a Chebyshev system is used. Of course, in this case the definition is not given in terms of multiple zeroes. The above definition is used here to simplify presentation.
\rev{The simplest relaxation is to require that $\{\phi_i\}_{i=1}^m$ are merely continuous and that no linear combination can vanish at more than $m-1$ points.}
\end{remark}

By selecting $\phi_i(x) = x^{i-1}$ we see that polynomials are an example of a Chebyshev system. 
Perhaps the simplest example of a non-trivial Chebyshev system is given by the following example.

\begin{example}
Let $\phi_i(x) = \mathrm{e}^x x^{i-1}$ for $i=1,\dots,m$.
Then
$\{ \phi_i \}_{i=1}^m$ constitute a Chebyshev system.
To verify this, observe that any linear combination $\phi$ of $\phi_1,\ldots,\phi_m$ is of the form $\phi(x) = \mathrm{e}^x P(x)$ for a polynomial $P$ of degree at most $m-1$ and that the $j$th derivative of this function takes the form
\begin{equation} \label{eq:phi-derivative}
    \phi^{(j)}(x) = \mathrm{e}^x \big[ P(x) + c_1 P^{(1)}(x) + \cdots + c_j P^{(j)}(x) \big]
\end{equation}
for certain integer coefficients $c_1,\ldots,c_j$. 
We observe that $\phi(x_0) = 0$ for a point $x_0$ if and only if $P(x_0) = 0$. If also $\phi^{(1)}(x_0) = 0$, then it follows from \eqref{eq:phi-derivative} that $P^{(1)}(x_0) = 0$, and, generally, that $\phi^{(i)}(x_0) = 0$ for $i=0,\ldots,j$ if and only if $P^{(i)}(x_0) = 0$. That is, the zeroes of $\phi$ are precisely those of $P$ and, consequently, the functions $\phi_i$ constitute a Chebyshev system.
\end{example}

\subsubsection{Interpolation using a Chebyshev system}

A crucial property of generalised polynomials is that unique interpolants can be constructed using them, as we next show.
For any Chebyshev system $\{\phi_i\}_{i=1}^n$ and a set of distinct points $X = \{x_1,\ldots,x_n\} \subset [a,b]$, we know that there cannot exist $\b{\alpha} = (\alpha_1,\ldots,\alpha_n) \neq \b{0}$ such that
\begin{equation*}
    \sum_{i=1}^n \alpha_i \phi_i(x_j) = 0 \quad \text{ for every } \quad j = 1,\ldots,n
\end{equation*}
since $\alpha_1 \phi_1 + \cdots + \alpha_n \phi_n$ can have at most $n-1$ zeroes. Equivalently, the only solution $\b{\beta} \in \mathbb{R}^n$ to the linear system $\b{V}_X^\T \b{\beta} = \b{0}$ defined by the $n \times n$ matrix $[\b{V}_X]_{ij} = \phi_i(x_j)$ is $\b{\beta} = \b{0}$. That is, $\b{V}_X$ is invertible.

For any data $\{(x_i,f(x_i))\}_{i=1}^n$, the above fact guarantees the existence and uniqueness of an interpolant $s_{X,f}$ such that (i) $s_{X,f}$ is in $\mathrm{span}\{\phi_1,\ldots,\phi_n\}$ and (ii) $s_{X,f}(x_j) = f(x_j)$ for each $j=1,\ldots,n$.
These two requirements imply that
\begin{equation*}
    s_{X,f}(x_j) = \sum_{i=1}^n \alpha_i \phi_i(x_j) = f(x_j)
\end{equation*}
for and some $\b{\alpha} \in \mathbb{R}^n$ and every $j = 1,\dots, n$. 
In matrix form, these $n$ equations are equivalent to $\b{V}_X^\T \b{\alpha} = \b{f}_X$. Hence $\b{\alpha} = \b{V}_X^{-\T} \b{f}_X$ and the interpolant is
\begin{equation} \label{eq:interpolant-matrix-form}
s_{X,f}(x) = \b{\phi}(x)^\T \b{\alpha} = \b{\phi}(x)^\T \b{V}_X^{-\T} \b{f}_X
\end{equation}
for $[\b{\phi}(x)]_i = \phi_i(x)$ an $n$-vector. 

\subsubsection{Hermite interpolants} \label{sec:hermite-interpolation}

A \emph{Hermite interpolant} $s_{X,\b{q},f}$ is based on data containing also derivative values (see \cite[Section 3.6]{Atkinson1989} for polynomial and \cite[Chapter 36]{Fasshauer2007} for kernel-based Hermite interpolation). In this setting, the point set $X$ contains $m$ points and $\b{q} \in \mathbb{N}_0^m$ is a vector of multiplicities such that $\sum_{i=1}^m q_i = n$. The data to be interpolated is
\begin{equation*}
\Set{(x_i, f^{(j_i)}(x_i))}{i = 1,\ldots,m \text{ and } j_i = 0, \ldots , q_i - 1}.
\end{equation*}
That is, the interpolant is to satisfy
\begin{equation*}
        s_{X,\b{q},f}^{(j_i)}(x_i) = f^{(j_i)}(x_i)
\end{equation*}
for each $i=1,\ldots,m$ and $j_i = 0, \ldots, q_i - 1$. 
Note that the interpolant $s_{X,f}$ is a Hermite interpolant with $m = n$ and $q_1 = \cdots = q_n = 1$. If the interpolant is to lie in $\mathrm{span}\{\phi_1,\ldots,\phi_n\}$, we must have, for some $\alpha_1,\ldots,\alpha_n$,
\begin{equation*}
s_{X,\b{q},f}^{(j_i)}(x_i) = \sum_{l=1}^n \alpha_l \phi_l^{(j_i)}(x_i) = f^{(j_i)}(x_i).
\end{equation*}
Again, these $n$ equations define a linear system that is invertible because $\{\phi_i\}_{i=1}^n$ constitute a Chebyshev system. The Hermite interpolant can be written in the form \eqref{eq:interpolant-matrix-form} with $\b{V}_X$ replaced with a version involving also derivatives of $\phi_i$ (see e.g. \cite[Section 2.3.1]{Oettershagen2017}).

\begin{figure}[t!]
\centering
  \includegraphics{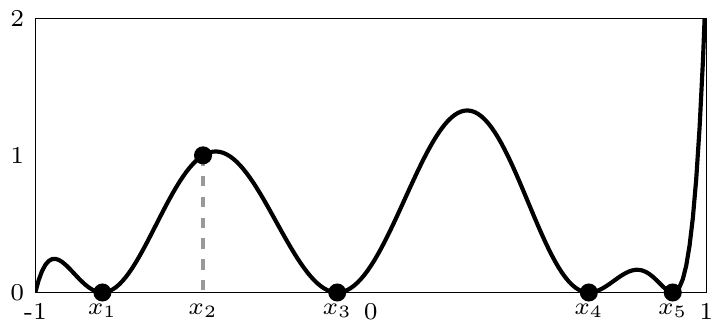}
  \caption{Example of a Hermite interpolant $F_i$ used in proving positivity of the weights of generalised Gaussian quadrature rule. This figure uses the Chebyshev system formed by $\phi_i(x) = \mathrm{e}^x x^{i-1}$.}\label{fig:hermite-interp}
\end{figure}

\subsubsection{Generalised Gaussian quadrature}

A \emph{generalised Gaussian quadrature rule} is a quadrature rule that uses $n$ points to integrate exactly all functions in the span of $\{\phi_i\}_{i=1}^{2n}$ constituting a Chebyshev system:
\begin{equation} \label{eq:gen-Gaussian-quadrature}
    \sum_{i=1}^n w_i \phi(x_i) = \int_a^b \phi(x) \dif \nu(x)
\end{equation}
for every $\phi \in \mathrm{span}\{\phi_1,\ldots,\phi_{2n}\}$.
The existence and uniqueness of the points and weights is guaranteed under fairly general assumptions~\cite{Barrow1978}. 
We prove positivity of the weights by constructing a function \sloppy{${F_i \in \mathrm{span}\{\phi_1,\ldots,\phi_{2n}\}}$} analogous to $G_i$ in Section \ref{sec:gauss-quadrature}. 

\begin{proposition}
Assume that $\nu$ admits a Lebesgue density.
Then the weights $w_1,\dots,w_n$ of the generalised Gaussian quadrature rule \eqref{eq:gen-Gaussian-quadrature} are positive.
\end{proposition}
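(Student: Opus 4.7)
The plan is to mirror the polynomial argument of Section~\ref{sec:gauss-quadrature} essentially verbatim, replacing $L_i^2$ with a Hermite interpolant built from the Chebyshev system. The role of $G_i = L_i^2$ was to provide a function lying in the exactness space of the rule, having a double zero at every node except $x_i$ (so that no sign change occurs there), value $1$ at $x_i$, and no further zeros. I will construct an analogous $F_i$ inside $\mathrm{span}\{\phi_1,\ldots,\phi_{2n-1}\} \subset \mathrm{span}\{\phi_1,\ldots,\phi_{2n}\}$.

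First I would invoke the Hermite interpolation theory from Section~\ref{sec:hermite-interpolation}: since $\{\phi_i\}_{i=1}^{2n-1}$ is itself an extended Chebyshev system (as a truncation of the larger one), the Hermite interpolation problem in this $(2n-1)$-dimensional space with data
\begin{equation*}
F_i(x_i) = 1, \quad F_i(x_j) = 0 \text{ and } F_i^{(1)}(x_j) = 0 \text{ for } j \neq i,
\end{equation*}
comprising exactly $1 + 2(n-1) = 2n-1$ conditions, has a unique solution $F_i$. By construction $F_i$ already has at least $2n-2$ zeros counted with multiplicity (double zeros at the $n-1$ points $x_j$, $j \neq i$).

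Next I would use the defining property of a Chebyshev system of order $2n-1$: a non-trivial element of $\mathrm{span}\{\phi_1,\ldots,\phi_{2n-1}\}$ has at most $2n-2$ zeros, counting multiplicities. Since $F_i$ is non-trivial (it equals $1$ at $x_i$), the $2n-2$ forced zeros already saturate this bound, so $F_i$ has no additional zeros and every zero is double. By the same local argument given in Section~\ref{sec:gauss-quadrature} (a $C^1$ function vanishing to order $2$ at a point cannot change sign there), $F_i$ does not change sign anywhere on $[a,b]$, and combined with $F_i(x_i) = 1 > 0$ this gives $F_i \geq 0$ on $[a,b]$ with strict positivity away from the remaining nodes.

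Finally, since $F_i \in \mathrm{span}\{\phi_1,\ldots,\phi_{2n}\}$, the generalised Gaussian rule \eqref{eq:gen-Gaussian-quadrature} integrates it exactly, so
\begin{equation*}
w_i = \sum_{j=1}^n w_j F_i(x_j) = \int_a^b F_i(x) \dif \nu(x) > 0,
\end{equation*}
where positivity of the integral uses the assumption that $\nu$ admits a Lebesgue density (and hence is a positive linear functional on almost-everywhere positive continuous functions). The only delicate point of this plan is justifying existence and uniqueness of the Hermite interpolant $F_i$ in the truncated Chebyshev system; this is where the extended Chebyshev hypothesis (differentiability and the multiplicity convention) is essential, and why Remark~\ref{remark:chebyshev}'s weaker continuity-only definition would not suffice without additional work.
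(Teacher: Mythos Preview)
Your argument is structurally sound, but it rests on an unjustified claim at the very first step: you assert that $\{\phi_i\}_{i=1}^{2n-1}$ is itself an extended Chebyshev system ``as a truncation of the larger one''. This does not follow from the hypothesis. The definition only guarantees that a nontrivial element of $\mathrm{span}\{\phi_1,\ldots,\phi_{2n}\}$ has at most $2n-1$ zeros; it says nothing about elements of the smaller span having at most $2n-2$. Subsets of Chebyshev systems need not be Chebyshev: remove $x^2$ from $\{1,x,x^2,x^3\}$ on $[-1,1]$ and the remaining three functions fail the condition, since $x^3-x$ has three zeros. As the labelling of $\phi_1,\ldots,\phi_{2n}$ is arbitrary, dropping $\phi_{2n}$ is no more privileged than dropping any other. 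Without this nesting property, neither the unique solvability of your Hermite problem in the $(2n-1)$-dimensional space nor the zero-count bound $2n-2$ that you use to rule out additional zeros is available.

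The paper avoids precisely this obstacle by working in the full $2n$-dimensional system throughout: it adds one extra interpolation condition $F_i(a)=0$ at the left endpoint, bringing the total to $2n$ conditions, so the Hermite interpolant exists uniquely in $\mathrm{span}\{\phi_1,\ldots,\phi_{2n}\}$ by the assumed Chebyshev property. The resulting $F_i$ has $2n-1$ zeros (one simple at the boundary point $a$, the rest double at the interior nodes $x_j$, $j\neq i$), saturating the bound for the full system; because the only simple zero lies on the boundary, no sign change occurs on $(a,b)$ and the positivity argument goes through. Your construction would become valid under a stronger \emph{Markov system} (nested Chebyshev) assumption, and would then be slightly cleaner than the paper's; but under the stated hypothesis the boundary-condition trick is what is actually needed.
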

\begin{proof}
Let $F_i$ be the Hermite interpolant to the data
\begin{equation*}
    f(a) = 0, \quad f(x_i) = 1, \quad f(x_j) = f^{(1)}(x_j) = 0 \text{ for } j \neq i.
\end{equation*}
An example is depicted in Figure~\ref{fig:hermite-interp}.
As there are $2n$ data points, $F_i$ indeed exists since $\{\phi_i\}_{i=1}^{2n}$ are a Chebyshev system. 
Moreover, $F_i$ has $2n-1$ zeroes. 
Because all its zeroes occurring on $(a,b)$ are double, $F_i$ cannot have sign-changes. Since $F_i(x_i) = 1 > 0$, we conclude $F_i$ is almost everywhere positive. Consequently, $w_i = I_\nu(F_i) > 0$.  \qed
\end{proof}

Next we turn our attention to kernels whose translates and their derivatives constitute Chebyshev systems.

\subsection{Totally positive kernels} \label{sec:totally-positive}

We are now ready to begin considering kernels and Bayesian quadrature. A concept related to Chebyshev systems is that of totally positive kernels whose theory is covered by the monograph of Karlin \cite{Karlin1968}. For a sufficiently differentiable kernel, define the derivatives
\begin{equation}\label{eq:kernel-derivative}
    k^{(j)}_y(x) \coloneqq k^{(j)}(x,y) \coloneqq \frac{\partial^j}{\partial z^j} k(x,z) \Bigr|_{z = y}.
\end{equation}
If the derivative
\begin{equation*}
  \frac{\partial^{2j}}{\partial x^j \partial y^j} k(x,y)
\end{equation*}
exists and is continuous for every $j \leq m$, the kernel is said to be $m$ times continuously differentiable, which we denote by writing $k \in C^{m}([a,b]^2)$. In this case, $f \in C^m([a,b])$ if $f \in \mathcal{H}(k)$ and that the kernel derivatives~\eqref{eq:kernel-derivative} act as representers for differentiation (i.e., $\langle f, k^{(j)}(\cdot,x) \rangle_{\mathcal{H}(k)} = f^{(j)}(x)$ for $f \in \mathcal{H}(k)$ and $j \leq m$); see \cite[Corollary 4.36]{Steinwart2008} and its proof. 

\begin{definition}[Totally positive kernel]
\label{def:totally-positive} 
A kernel $k \in C^\infty([a,b]^2)$ is (extended) \emph{totally positive of order $q \in \mathbb{N}$} if the collection
\begin{equation*}
    \big\{ k_{x_i}^{(j_i)} \, \colon \, i = 1,\ldots,m \text{ and } j_i = 0,\ldots,q_i-1 \big\}
\end{equation*}
constitutes a Chebyshev system for any $m \in \mathbb{N}$, any distinct $x_1,\ldots,x_m \in \Omega$ and any multiplicities $q_1,\ldots,q_m \leq q$ of these points.
\end{definition}

The class of totally positive kernels is smaller than that of positive-definite kernels. For the simplest case of $q=1$ and $m = n$ the total positivity condition is that the kernel translates $k_{x_1},\ldots,k_{x_n}$ constitute a Chebyshev system. This implies that the $n \times n$ matrix $[\b{K}_{Y,X}] \coloneqq k(y_j,x_i)$, which is just the matrix $\b{V}_Y$ considered in Section~\ref{sec:gen-gauss-quadrature} for the Chebyshev system $\phi_i = k_{x_i}$, is invertible for any \sloppy{${Y = \{y_1,\ldots,y_n\} \subset [a,b]}$}. Positive-definiteness of $k$ only guarantees that $\b{K}_{Y,X}$ is invertible when $Y = X$. 

Basic examples of totally positive kernels are the Gaussian kernel 
\begin{equation} \label{eq:gauss-kernel}
    k(x,x') = \exp\bigg( - \frac{(x-x')^2}{2\ell^2} \bigg) 
\end{equation}
with length-scale $\ell > 0$ and the Hardy kernel \sloppy{${k(x,x') = r^2/(r^2-xx')}$} for $r > 0$. Both of these kernels are totally positive of any order. 
There is also a convenient result that guarantees total positivity \cite[Proposition 3]{Burbea1976}: $k$ is totally positive if there are positive constants $a_m$ and a positive increasing function $v \in C^\infty([a,b])$ such that
\begin{equation*}
k(x,x') = \sum_{m=0}^\infty a_m v(x)^m v(x')^m
\end{equation*}
for all $x,x' \in \Omega$. More examples are collected in \cite{Karlin1968,Burbea1976}.

%%%%%
%%  GENERAL WEIGHTS
%%%%%

\subsection{General result on weights}

The following special case of the theory developed in \cite[Chapter 2]{KarlinStudden1966} appears in, for instance, \cite[Lemma 2]{RichterDyn1971a}. Its proof is a generalisation of the proof for the case $m=2n$ that was discussed in Section \ref{sec:gen-gauss-quadrature}.

\begin{proposition}\label{prop:weight-positivity} Suppose that $\{\phi_i\}_{i=1}^m \subset C^{m-1}([a,b])$ constitute a Chebyshev system, that $\nu$ admits  a Lebesgue density 
and that $Q(f) \coloneqq \sum_{i=1}^n w_i f(x_i)$ for $x_1,\ldots,x_m \in \Omega$ is a quadrature rule such that \sloppy{${Q(\phi_i) = I_\nu(\phi_i)}$} for each $i = 1,\ldots,m$. 
Then at least $\lfloor (m+1)/2 \rfloor$ of the weights $w_1, \ldots, w_n$ are positive.
\end{proposition}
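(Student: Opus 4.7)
The plan is to argue by contradiction. Suppose that at most $\lfloor (m+1)/2 \rfloor - 1$ of the weights are positive, and let $p$ denote the number of positive weights. Checking both parities of $m$, this assumption is equivalent to $2p \leq m - 1$. After relabelling, write $w_1, \ldots, w_p > 0$ and $w_{p+1}, \ldots, w_n \leq 0$. The core of the strategy is to exhibit a nonzero $F \in \mathrm{span}\{\phi_1, \ldots, \phi_m\}$ that is nonnegative on $[a, b]$ and vanishes at $x_1, \ldots, x_p$. Once such $F$ is in hand, the exactness of $Q$ on $\mathrm{span}\{\phi_i\}_{i=1}^m$ yields
\[
0 < I_\nu(F) = Q(F) = \sum_{i=p+1}^n w_i F(x_i) \leq 0,
\]
which is the desired contradiction: the strict left-hand inequality holds because $F$ is a nontrivial generalised polynomial in a Chebyshev system (hence has only finitely many zeros) and $\nu$ admits a Lebesgue density, while the right-hand inequality uses $F(x_i) = 0$ for $i \leq p$ together with $F(x_i) \geq 0$ and $w_i \leq 0$ for $i > p$.

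To construct $F$, I would prescribe $m - 1$ admissible Hermite conditions. The backbone $2p$ conditions are a double zero at each positive-weight point: $F(x_i) = F'(x_i) = 0$ for $i = 1, \ldots, p$. The remaining $m - 1 - 2p$ conditions are chosen by parity: if $m - 2p$ is odd, select $(m - 2p - 1)/2$ auxiliary interior points $y_j \in (a, b) \setminus \{x_1, \ldots, x_n\}$ and require double zeros there; if $m - 2p$ is even, select $(m - 2p)/2 - 1$ such auxiliary double zeros and additionally impose a simple zero at the endpoint $a$. In either case the prescribed conditions are contiguous derivative blocks (of orders $0$ only, or $0$ and $1$), so the unique Hermite-interpolation property of an extended Chebyshev system guarantees their linear independence. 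The solution set therefore has dimension $m - (m - 1) = 1$; let $F$ be any nonzero element.

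On the one hand $F$ has at least $m - 1$ zeros counting multiplicity by construction; on the other hand, as a nonzero element of a Chebyshev system, it has at most $m - 1$. Consequently these zeros exhaust the zero set of $F$. All interior zeros are double, and the only possible simple zero lies at the endpoint $a$, where any sign change is invisible from inside $[a, b]$. Hence $F$ has constant sign on $[a, b]$; replacing $F$ with $-F$ if necessary gives $F \geq 0$, completing the construction.

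I expect the parity bookkeeping to be the main obstacle. The even case costs one condition at the boundary, and it must be argued that this endpoint zero is harmless for nonnegativity on $[a, b]$; more delicately, the proposed Hermite tableau must be verified to be admissible (contiguous derivative orders) so that the extended Chebyshev system interpolation theorem delivers independent conditions. The Chebyshev hypothesis enters twice: once to ensure that the $m - 1$ conditions are linearly independent, so that a nonzero $F$ exists, and once to cap the total zero count of $F$ by $m - 1$, so that no extra sign changes can sneak in.
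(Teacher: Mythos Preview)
Your argument is correct and is precisely the ``generalisation of the proof for the case $m=2n$'' that the paper alludes to but does not spell out: the core device---building a nonnegative element of $\mathrm{span}\{\phi_i\}$ by imposing double interior zeros (plus, when parity demands, a harmless simple zero at an endpoint) and then using the Chebyshev zero-count to preclude sign changes---is the same as in the paper's treatment of generalised Gaussian quadrature, only here applied to the positive-weight nodes and wrapped in a contradiction. The one cosmetic difference is that the paper's $m=2n$ argument normalises $F_i(x_i)=1$ to obtain each weight directly, whereas you leave a one-dimensional kernel and choose the sign; for general $m<2n$ the per-weight normalisation no longer pins down a single $F_i$ that kills all other nodes, so your contradiction formulation is the natural extension.
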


An immediate consequence of this proposition is that a Bayesian quadrature rule based on a totally positive kernel has at least one half of its weights positive.

\begin{theorem}\label{thm:BQ-positivity-general} Suppose that the kernel $k \in C^\infty([a,b]^2)$ is totally positive of order 1. Then, for any points, at least $\lfloor (n+1)/2 \rfloor$ of the Bayesian quadrature weights $w_{X,1}^\textsm{BQ}, \ldots, w_{X,n}^\textsm{BQ}$ are positive.
\end{theorem}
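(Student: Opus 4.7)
My plan is to obtain this theorem as a direct corollary of Proposition~\ref{prop:weight-positivity}, by supplying a Chebyshev system of $m = n$ functions that the Bayesian quadrature rule integrates exactly. The natural candidate, given the form of \eqref{eq:BQ-exactness}, is the family of kernel translates
\begin{equation*}
\phi_i \coloneqq k_{\b{x}_i} = k(\cdot, \b{x}_i), \quad i = 1, \ldots, n,
\end{equation*}
evaluated at the quadrature nodes $\b{x}_1, \ldots, \b{x}_n$.

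First I would verify that $\{\phi_i\}_{i=1}^n$ satisfies the hypotheses of Proposition~\ref{prop:weight-positivity}. Since $k \in C^\infty([a,b]^2)$, each $\phi_i$ is infinitely differentiable on $[a,b]$, so in particular $\{\phi_i\}_{i=1}^n \subset C^{n-1}([a,b])$. To see that the $\phi_i$ form a Chebyshev system, I would apply Definition~\ref{def:totally-positive} with $m = n$, multiplicities $q_1 = \cdots = q_n = 1$ (allowed since $k$ is totally positive of order $q = 1$), and points $\b{x}_1, \ldots, \b{x}_n$: the resulting collection is precisely $\{k_{\b{x}_i}\}_{i=1}^n$, and total positivity of order $1$ asserts exactly that this collection is a Chebyshev system.

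Next I would invoke the exactness identity \eqref{eq:BQ-exactness}, which gives $I_X^\textsm{BQ}(\phi_i) = I_X^\textsm{BQ}(k_{\b{x}_i}) = k_\nu(\b{x}_i) = I_\nu(k_{\b{x}_i}) = I_\nu(\phi_i)$ for every $i = 1,\ldots,n$. Therefore the Bayesian quadrature rule $Q(f) = \sum_{i=1}^n w_{X,i}^\textsm{BQ} f(\b{x}_i)$ integrates every member of the Chebyshev system $\{\phi_i\}_{i=1}^n$ exactly. Applying Proposition~\ref{prop:weight-positivity} with this choice of $m = n$ yields that at least $\lfloor (n+1)/2 \rfloor$ of the weights $w_{X,1}^\textsm{BQ}, \ldots, w_{X,n}^\textsm{BQ}$ are positive, which is precisely the claim.

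Since both ingredients (the Chebyshev property from total positivity and the exact reproduction of kernel translates) are already in place, I do not foresee a serious technical obstacle. The only point that requires some care is aligning the indexing conventions: Proposition~\ref{prop:weight-positivity} lets the size $m$ of the Chebyshev system differ from the number $n$ of quadrature weights, and here we are in the less favourable regime $m = n$ (as opposed to the generalised Gaussian case $m = 2n$ of Section~\ref{sec:gen-gauss-quadrature}), which explains why the bound is only $\lfloor (n+1)/2 \rfloor$ rather than full positivity. Obtaining all weights positive requires additional exact integration of derivative-type functionals, which is where Hermite interpolation and the local optimality condition of Corollary~\ref{cor:optimal-weights} come into play.
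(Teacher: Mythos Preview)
Your proposal is correct and follows exactly the same route as the paper's proof: invoke total positivity of order~1 to conclude that the kernel translates $\{k_{x_i}\}_{i=1}^n$ form a Chebyshev system, use the exactness identity~\eqref{eq:BQ-exactness}, and apply Proposition~\ref{prop:weight-positivity} with $m=n$. The only cosmetic difference is that the paper is terser and, since we are in the univariate setting $\Omega=[a,b]$, writes $x_i$ rather than the boldface $\b{x}_i$.
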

\begin{proof} Since the kernel is totally positive of order 1, the translates $\{k_{x_i}\}_{i=1}^n$ constitute a Chebyshev system. The exactness condition \eqref{eq:BQ-exactness} holds for each of these functions. The claim follows by setting $m = n$ in Proposition~\ref{prop:weight-positivity}.
\qed
\end{proof}

%%%%%
%%  OPTIMAL POINTS
%%%%%

\subsection{Weights for locally optimal points} \label{sec:positivity-optimal}

Recall the definition of the Bayesian quadrature variance:
\begin{equation*}
    \mathbb{V}_X^\textsm{BQ} = I_\nu(k_\nu) - \sum_{i=1}^n w_{X,i}^\textsm{BQ} k_\nu(x_i) = I_\nu(k_\nu) - \b{k}_{\nu,X}^\T \b{K}_X^{-1} \b{k}_{\nu,X}.
\end{equation*}
The variance can be considered a function $X \mapsto \mathbb{V}_X^\textsm{BQ}$ defined on the simplex
\begin{equation*}
    \mathcal{S}^n \coloneqq \{ \b{z} \in [a,b]^n \, : \, a < z_1 < \cdots < z_n < b  \} \subset [a,b]^n.
\end{equation*}
We introduce the following definition of locally optimal points. \rev{For this purpose, define the function
\begin{equation*}
    E(Z) \coloneqq \mathbb{V}_Z^\textsm{BQ} \quad \text{ for } \quad Z = (z_1, \ldots, z_n) \in \mathcal{S}^n
\end{equation*}
and its partial derivatives
\begin{equation*}
    E_j(X) \coloneqq \frac{\partial}{\partial z_j} E(Z) \Bigr|_{Z = X}.
\end{equation*}
}

\begin{definition} \label{def:bq-optimal} Let $m \leq n$. A Bayesian quadrature rule with points $X \subset [a,b]$ is \emph{locally $m$-optimal} if $X \in \mathcal{S}^n$ and \rev{there is an index set $\mathcal{I}_m^* \subset \{1, \ldots, n\}$ of $m$ indices such that
\begin{equation} \label{eq:optimality-condition}
    E_j(X) = \frac{\partial}{\partial z_j} \mathbb{V}_Z^\textsm{BQ} \Bigr|_{Z = X} = 0 \; \text{ for every } \; j \in \mathcal{I}_m^*.
\end{equation}
}
A locally $n$-optimal rule is called \emph{locally optimal}. 
The point set of a locally $m$-optimal Bayesian quadrature rule is also called locally $m$-optimal.
\end{definition}

When the kernel is totally positive of any order, it has been shown that any local minimiser of $\mathbb{V}_X^\textsm{BQ}$ is locally optimal in the sense of above definition. That is, no point in a point set that locally minimises the variance can be located on the boundary of the integration interval nor can any two points in the set coalesce.\footnote{Coalescence is possible because $\mathbb{V}_X^\textsm{BQ}$ is in fact a continuous function of $X$ defined on the whole of $\Omega^n$, not merely on $\mathcal{S}^n$~\cite[Proposition 5.5]{Oettershagen2017}. Coalescence of some of the points would result in a quadrature rule that uses also evaluations of derivatives of the integrand.}
These results, the origins of which can be traced to the 1970s~\cite{BarrarLoebWerner1974,BarrarLoeb1976,Bojanov1979}, have been recently collated by Oettershagen~\cite[Corollary 5.13]{Oettershagen2017}.

A locally $m$-optimal Bayesian quadrature rule is, in addition to the kernel translates at $X$, exact for translate derivatives at $x_j$ with $j \in \mathcal{I}_m^*$ (it is worth noting that Bayesian quadrature rules with derivative evaluations have been recently considered in \cite{PruherSarkka2016,WuAoiPillow2018}). When $m = n$, this is analogous to the interpretation of classical Gaussian quadrature rules as integrated Hermite interpolants~\cite{RichterDyn1971b}. This result first appeared in \cite{Larkin1970}. Its proof is typically based on considering the RKHS representation
\begin{equation*}
    \mathbb{V}_X^\textsm{BQ} = \norm[3]{k_\nu - \sum_{i=1}^n w_{X,i}^\textsm{BQ} k_{x_i}}_{\mathcal{H}(k)}^2
\end{equation*}
of the variance; see \cite[Section 3]{RichterDyn1971a} or \cite[Section 5.1.3]{Oettershagen2017}.
We present a mainly linear algebraic proof.

\begin{proposition} \label{prop:derivative-integration} Let  $m \leq n$. Suppose that the $n$-point set $X \in \mathcal{S}^n$ is locally $m$-optimal. If the kernel $k$ is once continuously differentiable, then
\begin{equation} \label{eq:m-optimal-exactness}
    \begin{split}
    I_X^\textsm{BQ}(k_{x}) &= I_\nu(k_{x}) \quad &&\text{for } \quad x \in X, \\
    I_X^\textsm{BQ}(k^{(1)}_{x_j}) &= I_\nu(k^{(1)}_{x_j}) \rev{\: \text{ or } \: w_{X,j}^\textsm{BQ} = 0} \quad &&\text{for } \quad j \in \mathcal{I}_m^*,
\end{split}
\end{equation}
where $k_x^{(1)}$ is the kernel derivative defined in~\eqref{eq:kernel-derivative}.
\end{proposition}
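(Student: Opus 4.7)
The first batch of equalities in \eqref{eq:m-optimal-exactness} is just the standard Bayesian quadrature exactness property \eqref{eq:BQ-exactness} and requires no new argument, so the real content lies in the second line. The plan is to derive a closed-form expression for $E_j(X)$ by differentiating $E(Z) = I_\nu(k_\nu) - \b{k}_{\nu,Z}^\T \b{K}_Z^{-1} \b{k}_{\nu,Z}$ at $Z = X$ and then invoke the optimality hypothesis $E_j(X) = 0$ for each $j \in \mathcal{I}_m^*$.

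The calculation is driven by two structural observations: (i) only the $j$th entry of $\b{k}_{\nu,Z}$ depends on $z_j$, through $k_\nu(z_j)$ whose derivative is $k_\nu^{(1)}(z_j)$; and (ii) only row $j$ and column $j$ of $\b{K}_Z$ depend on $z_j$, with the on-diagonal $(j,j)$ entry acquiring an extra factor of two from $\partial_{z_j} k(z_j, z_j) = 2\, k^{(1)}(z_j, z_j)$ by symmetry of $k$. Combining these with the standard identity $\partial \b{K}_Z^{-1}/\partial z_j = -\b{K}_Z^{-1}(\partial \b{K}_Z/\partial z_j)\b{K}_Z^{-1}$ and the definition $\b{w}_X^\textsm{BQ} = \b{K}_X^{-1}\b{k}_{\nu,X}$, I expect the derivative to collapse into
\begin{equation*}
E_j(X) = -2\, w_{X,j}^\textsm{BQ}\, k_\nu^{(1)}(x_j) + (\b{w}_X^\textsm{BQ})^\T \frac{\partial \b{K}_Z}{\partial z_j}\bigg|_{Z=X} \b{w}_X^\textsm{BQ} = 2\, w_{X,j}^\textsm{BQ} \bigl[ I_X^\textsm{BQ}(k_{x_j}^{(1)}) - I_\nu(k_{x_j}^{(1)}) \bigr],
\end{equation*}
where the identity $I_\nu(k_{x_j}^{(1)}) = k_\nu^{(1)}(x_j)$ is obtained by differentiation under the integral sign in $k_\nu(y) = \int k(y, x)\dif \nu(x)$, justified by $k \in C^1([a,b]^2)$. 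Setting $E_j(X) = 0$ for $j \in \mathcal{I}_m^*$ then forces either $w_{X,j}^\textsm{BQ} = 0$ or $I_X^\textsm{BQ}(k_{x_j}^{(1)}) = I_\nu(k_{x_j}^{(1)})$, which is precisely the second line of \eqref{eq:m-optimal-exactness}.

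The main obstacle I anticipate is the bookkeeping in the quadratic form $(\b{w}_X^\textsm{BQ})^\T (\partial \b{K}_Z/\partial z_j)|_{Z=X} \b{w}_X^\textsm{BQ}$: the row-$j$ and column-$j$ contributions are equal by symmetry, and together with the doubled $(j,j)$ diagonal entry they must telescope into the single clean sum $2\, w_{X,j}^\textsm{BQ} \sum_l w_{X,l}^\textsm{BQ}\, k^{(1)}(x_l, x_j) = 2\, w_{X,j}^\textsm{BQ}\, I_X^\textsm{BQ}(k_{x_j}^{(1)})$ rather than leaving a residual off-diagonal term. A conceptually cleaner alternative, sketched in the paragraph preceding the proposition, is to apply an envelope argument to the RKHS representation $\mathbb{V}_X^\textsm{BQ} = \lVert k_\nu - \sum_i w_i k_{x_i} \rVert_{\mathcal{H}(k)}^2$: because the weights are at their minimiser, they can be held fixed when differentiating with respect to $x_j$, and the derivative-reproducing property $\langle f, k_y^{(1)}\rangle_{\mathcal{H}(k)} = f^{(1)}(y)$ recovers the same factorisation in a single line.
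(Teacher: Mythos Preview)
Your proposal is correct and follows essentially the same route as the paper's proof: direct differentiation of $\mathbb{V}_Z^\textsm{BQ} = I_\nu(k_\nu) - \b{k}_{\nu,Z}^\T \b{K}_Z^{-1} \b{k}_{\nu,Z}$ via the matrix-inverse derivative identity, exploiting the sparse row/column structure of $\partial_j \b{K}_X$ and the doubled diagonal entry to collapse the quadratic form into $2\,w_{X,j}^\textsm{BQ}\,I_X^\textsm{BQ}(k_{x_j}^{(1)})$, and then reading off the factorised form $E_j(X) = -2\,w_{X,j}^\textsm{BQ}\bigl[I_\nu(k_{x_j}^{(1)}) - I_X^\textsm{BQ}(k_{x_j}^{(1)})\bigr]$. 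The envelope/RKHS alternative you sketch at the end is also mentioned in the paper (just before the proposition) as the more traditional route, but the paper's actual proof is the linear-algebraic one you outline.
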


\begin{proof} By definition of local $m$-optimality, the partial derivatives
\begin{equation*}
    E_j(X) = \frac{\partial}{\partial z_j} E(Z) \Bigr|_{Z = X}
\end{equation*}
must vanish for each $j \in \mathcal{I}_m^*$. Let $\partial_i \b{g}(X) \in \mathbb{R}^n$ stand for the $i$th partial derivative of a vector-valued function $\b{g} \colon \mathbb{R}^n \to \mathbb{R}^n$ evaluated at $X$. From the explicit expression~\eqref{eq:bq-variance} for the variance we compute
\begin{equation*}
\begin{split}
    E_j(X) ={}& -2(\partial_j \b{k}_{\nu,X}^\T) \b{K}_X^{-1} \b{k}_{\nu,X} \\
    &+ \b{k}_{\nu,X}^\T \b{K}_X^{-1} (\partial_j \b{K}_X) \b{K}_X^{-1} \b{k}_{\nu,X} \\
    ={}& -2(\partial_j \b{k}_{\nu,X}^\T) \b{w}_X^\textsm{BQ} + (\b{w}_X^\textsm{BQ})^\T (\partial_j \b{K}_X) \b{w}_X^\textsm{BQ},
\end{split}
\end{equation*}
where the inverse matrix derivative formula
\begin{equation*}
\frac{\dif}{\dif x} \b{A}(x)^{-1} = -\b{A}(x)^{-1} \bigg[ \frac{\dif}{\dif x} \b{A}(x) \bigg] \b{A}(x)^{-1}
\end{equation*}
and the weight expression $\b{w}_X^\textsm{BQ} = \b{K}_X^{-1} \b{k}_{\nu,X}$ have been used.
The two partial derivatives appearing in the equation for $E_j(X)$ can be explicitly computed. First, only the $j$th element of $\b{k}_{\nu,X}$ depends on $x_j$. Thus,
\begin{equation*}
    [\partial_j \b{k}_{\nu,X}]_i = \frac{\partial}{\partial x_j} \int_a^b k(x,x_i) \dif \nu(x) = I_\nu(k_{x_j}^{(1)}) \delta_{ij}.
\end{equation*}
Secondly, only the $j$th row and column of $\b{K}_X$ have dependency on $x_j$. For $l \neq j$ we have
\begin{equation*}
    [\partial_j \b{K}_X]_{lj} = [\partial_j \b{K}_X]_{jl} = \frac{\partial}{\partial z} k(x_l,z) \Bigr|_{z = x_j} = k_{x_j}^{(1)}(x_l),
\end{equation*}
where the first equality is consequence of symmetry of the kernel. The diagonal element is a total derivative:
\begin{equation*}
\begin{split}
    [\partial_j \b{K}_X]_{jj} = \frac{\dif}{\dif x_j} k(x_j,x_j) &= 2\frac{\partial}{\partial z} k(x_j,z) \Bigr|_{z = x_j} \\
    &= 2k_{x_j}^{(1)}(x_j).
\end{split}
\end{equation*}
Therefore $\partial_j \b{K}_X$ is a zero matrix except for the $j$th row and column that are
\begin{equation*}
    \begin{bmatrix} k^{(1)}_{x_j}(x_1) & \cdots & k^{(1)}_{x_{j}}(x_{j-1}) & 2k^{(1)}_{x_j}(x_j) & k^{(1)}_{x_{j}}(x_{j+1}) & \cdots & k^{(1)}_{x_j}(x_n)\end{bmatrix}
\end{equation*}
and its transpose, respectively.
Hence
\begin{equation*}
\begin{split}
    E_j(X) &= -2 w_{X,j}^\textsm{BQ} I_\nu(k_{x_j}^{(1)}) + \sum_{i=1}^n \sum_{l=1}^n w_{X,i}^\textsm{BQ} w_{X,l}^\textsm{BQ} [\partial_j \b{K}_X]_{il} \\
    &= -2 w_{X,j}^\textsm{BQ} I_\nu(k_{x_j}^{(1)}) + 2 w_{X,j}^\textsm{BQ} \sum_{i=1}^n w_{X,i}^\textsm{BQ} k_{x_j}^{(1)}(x_i) \\
    &= - 2 w_{X,j}^\textsm{BQ}\big[I_\nu(k_{x_j}^{(1)}) - I_X^\textsm{BQ}(k_{x_j}^{(1)})\big].
\end{split}
\end{equation*}
If $w_{X,j}^\textsm{BQ} \neq 0$, then $E_j(X) = 0$ so that the form of $E_j$ above implies that $I_X^\textsm{BQ}(k_{x_j}^{(1)}) = I_\nu(k_{x_j}^{(1)})$. This concludes the proof. \qed
\end{proof}

\begin{remark}
Proposition \ref{prop:derivative-integration} admits an obvious multivariate extension~\cite[\foreignlanguage{russian}{Теорема} 2]{Gavrilov1998}: when $d > 1$, the $md$ partial derivative representers
\begin{equation*}
    \frac{\partial}{\partial z_j} k(\cdot, \b{z}) \Bigr|_{\b{z} = \b{x}_i}
\end{equation*}
for $j = 1,\ldots,d$ and $i \in \mathcal{I}_m^*$ are integrated exactly by a locally $m$-optimal Bayesian quadrature rule, defined by requiring a gradient version of \eqref{eq:optimality-condition}. See also~\cite{Gavrilov2007}. However, there appear to exist no generalisations of Chebyshev systems and Proposition~\ref{prop:weight-positivity} to higher dimensions.
\end{remark}

%%%%% And this is the better version as formulated by Moto
\rev{
\begin{theorem} \label{thm:m-optimal-weights}
Let $k \in C^\infty([a,b]^2)$ be a totally positive kernel of order 2 and $m \leq n$. Suppose that the point set $X \in \mathcal{S}^n$ is locally $m$-optimal with an index set $\mathcal{I}_m^* \subset \{1, \ldots, n\}$ and that the weights associated with $q \leq m$ indices in $\mathcal{I}_m^*$ are non-zero. Then at least $\lfloor (n+2m-q+1)/2 \rfloor$ of the weights are non-negative, and $q$ must satisfy $2m-n \leq q$.
\end{theorem}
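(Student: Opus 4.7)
The plan is to combine Proposition~\ref{prop:derivative-integration}, which supplies additional exactness conditions at the locally $m$-optimal indices, with Proposition~\ref{prop:weight-positivity} applied to an enlarged Chebyshev system formed from the kernel translates at $X$ together with their first derivatives at an appropriate subset of $X$.

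First I would partition $\mathcal{I}_m^*$ into the set $\mathcal{J}$ of $q$ indices whose associated Bayesian quadrature weight is non-zero and the set $\mathcal{Z} \coloneqq \mathcal{I}_m^* \setminus \mathcal{J}$ of $m-q$ indices at which the weight vanishes. By Proposition~\ref{prop:derivative-integration}, the rule $I_X^\textsm{BQ}$ exactly integrates $k_{x_j}^{(1)}$ for every $j \in \mathcal{J}$, and by~\eqref{eq:BQ-exactness} it also exactly integrates $k_{x_i}$ for every $i = 1, \ldots, n$, giving exactness on a total of $n+q$ functions.

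Next I would invoke total positivity of order $2$: this ensures that the collection above---assigning multiplicity $2$ at each $x_j$ with $j \in \mathcal{J}$ and multiplicity $1$ at the remaining $n-q$ points of $X$---constitutes a Chebyshev system of size $n+q$ on $[a,b]$ in the sense of Definition~\ref{def:totally-positive}. Applying Proposition~\ref{prop:weight-positivity} to this system then yields that at least $\lfloor (n+q+1)/2 \rfloor$ of the $n$ Bayesian quadrature weights are strictly positive. Since the $m-q$ weights indexed by $\mathcal{Z}$ are zero, and hence disjoint from the strictly positive weights just counted, the number of non-negative weights is at least
\[
\bigg\lfloor \frac{n+q+1}{2} \bigg\rfloor + (m-q) = \bigg\lfloor \frac{n+2m-q+1}{2} \bigg\rfloor,
\]
which establishes the first assertion.

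For the bound $q \geq 2m-n$, I would observe that the strictly positive weights produced above must lie at indices whose weight is not forced to vanish, and there are only $n - (m-q) = n - m + q$ such indices. Hence $\lfloor (n+q+1)/2 \rfloor \leq n - m + q$, and clearing the floor by doubling gives $n + q \leq 2(n - m + q)$, equivalently $q \geq 2m - n$. The main obstacle will be the bookkeeping: one must correctly identify which set of translates and derivatives forms a Chebyshev system under total positivity of order $2$, and carefully verify that the strictly positive weights furnished by Proposition~\ref{prop:weight-positivity} are disjoint from the enforced zeros in $\mathcal{Z}$ so that the two counts may be added without overlap.
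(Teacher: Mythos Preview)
Your proposal is correct and follows essentially the same approach as the paper: combine the exactness on the $n+q$ functions from Proposition~\ref{prop:derivative-integration} and~\eqref{eq:BQ-exactness}, use total positivity of order~2 to recognise these as a Chebyshev system, apply Proposition~\ref{prop:weight-positivity} to obtain $\lfloor (n+q+1)/2 \rfloor$ strictly positive weights, and then add the $m-q$ zero weights to reach $\lfloor (n+2m-q+1)/2 \rfloor$ non-negative weights. Your derivation of $q \geq 2m-n$ via $\lfloor (n+q+1)/2 \rfloor \leq n-m+q$ is equivalent to the paper's observation that $\lfloor (n+2m-q+1)/2 \rfloor \leq n$, since the two inequalities differ only by shifting the integer $m-q$ across.
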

\begin{proof}
By \eqref{eq:m-optimal-exactness}, the Bayesian quadrature rule in the statement is exact for $n$ kernel translates and $q$ of their derivatives.
By the total positivity of the kernel, the collection of these $n+q$ functions constitutes a Chebyshev system. 
By Proposition~\ref{prop:weight-positivity}, at least $\lfloor (n+q+1)/2 \rfloor$ of the weights are positive.
Since the weights associated with $m-q$ indices in $\mathcal{I}_m^*$ are zero, it follows that at least $\lfloor (n+q+1)/2 \rfloor + m - q = \lfloor (n+2m-q+1)/2 \rfloor$ of the weights are non-negative. 
The lower-bound for $q$ follows because $ \lfloor (n+2m-q+1)/2 \rfloor \leq n$ implies that $n+2m-q+1 \leq 2n+1$.
\qed
\end{proof}
}

The main result of this section follows by setting $m=n$ in the preceding theorem and observing that this implies $q=n$, which means that there can be no zero weights.

\begin{corollary} \label{cor:optimal-weights} 
If $k \in C^\infty([a,b]^2)$ is totally positive of order 2 and $X \in \mathcal{S}^n$ is locally optimal, then all the Bayesian quadrature weights $w_{X,1}^\textsm{BQ},\ldots,w_{X,n}^\textsm{BQ}$ are positive.
\end{corollary}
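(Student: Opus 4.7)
The plan is to deduce the corollary directly from Theorem~\ref{thm:m-optimal-weights} by taking $m=n$, which is the strongest applicable instance of that theorem. Since local optimality in the sense of Definition~\ref{def:bq-optimal} corresponds exactly to local $n$-optimality with $\mathcal{I}_n^* = \{1,\ldots,n\}$, the hypotheses of Theorem~\ref{thm:m-optimal-weights} are met as soon as we verify that total positivity of order~2 holds, which is given.

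The first step is to extract the constraint on the number $q$ of non-zero weights whose indices lie in $\mathcal{I}_n^*$. The theorem asserts $2m - n \leq q \leq m$; substituting $m=n$ forces $n \leq q \leq n$, i.e.\ $q=n$. So no weight can vanish. The second step is to plug $q = m = n$ into the non-negativity count $\lfloor (n+2m-q+1)/2 \rfloor$, which yields $\lfloor (2n+1)/2 \rfloor = n$. Hence all $n$ weights are non-negative. Combining these two conclusions -- none is zero, and none is negative -- gives that every weight is strictly positive, which is the claim.

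Since the corollary is essentially an arithmetic specialisation of Theorem~\ref{thm:m-optimal-weights}, there is no real obstacle once the theorem is in hand; the only thing to be careful about is that Definition~\ref{def:bq-optimal} gives local optimality as the $m=n$ case of local $m$-optimality with $\mathcal{I}_n^* = \{1,\ldots,n\}$, so the theorem's hypothesis on the index set is automatically satisfied. The real work has already been done earlier: Proposition~\ref{prop:derivative-integration} supplies the additional exactness on kernel derivative translates at locally optimal points, and total positivity of order~2 ensures that the $n$ kernel translates together with their $n$ derivative translates form a Chebyshev system of $2n$ functions, to which Proposition~\ref{prop:weight-positivity} applies. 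One could alternatively give a self-contained proof by running this chain directly: the Bayesian rule is exact on $2n$ members of a Chebyshev system, so Proposition~\ref{prop:weight-positivity} immediately yields $n$ positive weights, and since a Bayesian rule with zero weight at some $x_j$ would contradict the derivative exactness being needed at that index, all $n$ weights must be strictly positive.
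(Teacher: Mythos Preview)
Your proof is correct and follows exactly the paper's approach: set $m=n$ in Theorem~\ref{thm:m-optimal-weights}, deduce $q=n$ from the constraint $2m-n \leq q \leq m$, and read off that all $n$ weights are non-negative and non-zero. The alternative ``self-contained'' sketch in your final sentence is a little loose---Proposition~\ref{prop:derivative-integration} only guarantees derivative exactness at $x_j$ \emph{when} $w_{X,j}^\textsm{BQ}\neq 0$, so you cannot assume exactness on all $2n$ functions before ruling out zero weights---but this is precisely what the $q$-bookkeeping in Theorem~\ref{thm:m-optimal-weights} handles, and your main argument uses that theorem correctly.
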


\begin{remark}
A key consequence of Corollary \ref{cor:optimal-weights} is the following: If $w_{X,1}^\textsm{BQ},\ldots,w_{X,n}^\textsm{BQ}$ contain negative values, then the design points $X$ are {\em not} locally optimal.
In other words, in this case there is still room for improvement by optimising these points using, for example, gradient descent.
In this way, the signs of the weights can provide information about the quality of the design point set.
\end{remark}

A positive-weight quadrature rule is a positive linear functional (i.e., every positive function is mapped to a positive real). 
A locally optimal Bayesian quadrature rule may therefore be appropriate for numerical integration of functions that are a priori known to be positive, such as likelihood functions. 
Theoretical comparison to warped models~\cite{Osborne2012,Gunter2014,Chai2018} that encode positivity of the integrand by placing the GP prior on, for example, square root of the integrand would be an interesting topic of research. 

\begin{figure*}[t!]
\centering
  \includegraphics{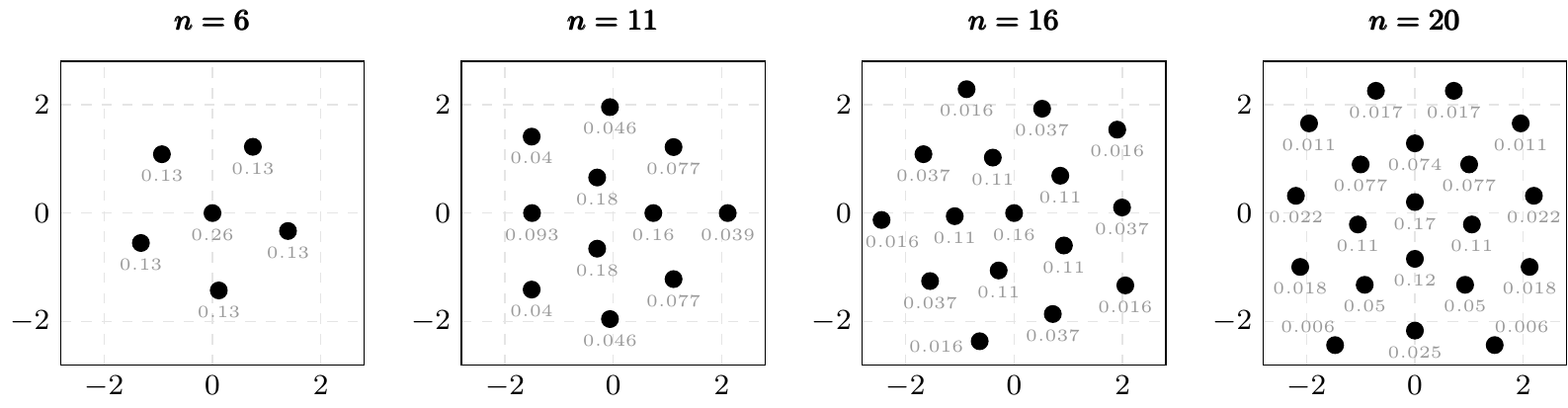}
  \caption{Locally optimal Bayesian quadrature point sets for the Gaussian measure and kernel on $\mathbb{R}^2$. The corresponding weights are written in grey. The sums of weights are $0.91~(n=6)$, $0.978~(n=11)$, $0.9975~(n=16)$ and $1.011~(n=20)$.
  }\label{fig:optimal-2d}
\end{figure*}

\subsection{Greedily selected points}

The optimal points discussed in the preceding section cannot be constructed efficiently (see \cite[Section 5.2]{Oettershagen2017} for what appears to be the most advanced published algorithm). In practice, points selected by greedy minimisation of the integral variance are often used. This approach is known as \emph{sequential Bayesian quadrature}~\cite{CookClayton1998,HuszarDuvenaud2012}. Assuming for a moment that $d$ is arbitrary and an $n$-point set $X_n \subset \Omega$ has been already generated, sequential Bayesian quadrature proceeds by selecting a new point $\b{x}_{n+1} \in \Omega$ by minimising the integral variance:
\begin{equation*}
\b{x}_{n+1} = \argmin_{ \b{x} \in \Omega} \mathbb{V}_{X_n \cup \{\b{x} \}}^\textsm{BQ}.
\end{equation*}
In higher dimensions there is little that we are able to say about qualitative properties of the resulting quadrature rules. However, when $d=1$ we can invoke Theorem \ref{thm:m-optimal-weights} since $X_n \cup \{x_{n+1}\}$ is locally $1$-optimal.

\begin{proposition} \label{prop:greedy-points} Suppose that $k \in C^\infty([a,b]^2)$ is totally positive of order 2. If $X_n \cup x_{n+1} \in \mathcal{S}^n$, then at least $\lfloor (n+3)/2 \rfloor$ of the weights of a $n+1$ point sequential Bayesian quadrature rule are positive.
\end{proposition}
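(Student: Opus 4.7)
The plan is to apply Theorem \ref{thm:m-optimal-weights}, or equivalently Propositions \ref{prop:derivative-integration} and \ref{prop:weight-positivity} directly, to the augmented point set $X_{n+1} \coloneqq X_n \cup \{x_{n+1}\}$ with total point count $n+1$ and $m = 1$. The first step is to argue that the greedy rule makes $X_{n+1}$ locally $1$-optimal in the sense of Definition \ref{def:bq-optimal}. By construction $x_{n+1}$ minimises $x \mapsto \mathbb{V}_{X_n \cup \{x\}}^\textsm{BQ}$ over $[a,b]$, and the hypothesis $X_n \cup \{x_{n+1}\} \in \mathcal{S}^{n+1}$ places $x_{n+1}$ strictly in the interior of $[a,b]$ and separated from $X_n$. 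First-order stationarity then forces the partial derivative $E_{n+1}(X_{n+1})$ to vanish, so \eqref{eq:optimality-condition} holds with $\mathcal{I}_1^* = \{n+1\}$.

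Assuming the generic case $w_{X_{n+1},n+1}^\textsm{BQ} \neq 0$, Proposition \ref{prop:derivative-integration} implies that the $(n{+}1)$-point rule integrates exactly the $n+2$ functions $k_{x_1}, \ldots, k_{x_{n+1}}, k_{x_{n+1}}^{(1)}$. Total positivity of order $2$ promotes this collection to a Chebyshev system on $[a,b]$, so Proposition \ref{prop:weight-positivity} applied with $m = n+2$ furnishes at least $\lfloor ((n+2)+1)/2 \rfloor = \lfloor (n+3)/2 \rfloor$ positive weights among the $n+1$ quadrature weights. Equivalently, this is what Theorem \ref{thm:m-optimal-weights} produces after the substitutions $n \mapsto n+1$, $m = 1$, $q = 1$, with the side remark that $m - q = 0$ ensures none of the guaranteed non-negative weights is zero.

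The main obstacle is to discharge the assumption $w_{X_{n+1},n+1}^\textsm{BQ} \neq 0$. A zero weight here would mean that adjoining $x_{n+1}$ leaves both the integral estimate and the variance unchanged, which contradicts the strict variance decrease accompanying a non-trivial greedy step (visible from the rank-one update formula for $\mathbb{V}^\textsm{BQ}$ in which the decrement is proportional to the squared numerator that, via $\mathbf{w}_{X_{n+1}}^\textsm{BQ} = \mathbf{K}_{X_{n+1}}^{-1} \mathbf{k}_{\nu, X_{n+1}}$, encodes the last weight). In the degenerate situation where no new point reduces the variance, $X_n$ already minimises $\mathbb{V}^\textsm{BQ}$ among all $(n{+}1)$-extensions and the proposition can be read as a statement about the parent rule; otherwise the above argument yields the advertised count.
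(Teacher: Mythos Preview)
Your approach is precisely the one the paper sketches: the greedy choice renders $X_n \cup \{x_{n+1}\}$ locally $1$-optimal, and the result follows from Theorem~\ref{thm:m-optimal-weights} with $n \mapsto n+1$ and $m=1$. The paper gives no further argument beyond the sentence preceding the proposition, so your write-up is already more detailed than the original.

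You are in fact more careful than the paper on one point. Theorem~\ref{thm:m-optimal-weights} delivers $\lfloor (n+3)/2 \rfloor$ \emph{positive} (as opposed to merely non-negative) weights only when $q=1$, i.e., when $w_{X_{n+1},n+1}^\textsm{BQ} \neq 0$; the paper silently assumes this. Your rank-one-update observation is correct: the variance decrement at the greedy step equals $(w_{X_{n+1},n+1}^\textsm{BQ})^2 \sigma_{X_n}^2(x_{n+1})$, so a vanishing new weight forces zero decrement for every candidate point and hence $\mathbb{V}_{X_n}^\textsm{BQ}=0$, meaning $k_\nu$ lies in the finite span of $k_{x_1},\ldots,k_{x_n}$. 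Your fallback (``read as a statement about the parent rule'') does not actually recover the count $\lfloor (n+3)/2 \rfloor$ in that degenerate case---Theorem~\ref{thm:BQ-positivity-general} on the $n$-point rule gives only $\lfloor (n+1)/2 \rfloor$---but for a totally positive kernel and a measure with a density this situation is genuinely pathological, and the paper simply ignores it.
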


\subsection{Other kernels and point sets} \label{sec:other-positive}

A number of combinations of kernels and point sets, that are not covered by the theory above, have been shown, either theoretically or experimentally, to yield positive Bayesian quadrature weights:
\begin{itemize}
\item \rev{The GP posterior mean for the Brownian motion kernel $k(x,x') = \min(x,x')$ on $[0,1]$ is a piecewise linear interpolant. As this implies that the Lagrange cardinal functions $u_{X,i}$ are non-negative, it follows from the identity $w_{X,i}^\textsm{BQ} = I_\nu(u_{X,i})$ that the weights are positive. See~\cite{Diaconis1988} and \cite[Lemma 8 in Section 3.2, Chapter 2]{Ritter2000} for more discussion.}
\item Suitably selected priors give rise to Bayesian quadrature rules whose posterior mean coincides with a classical rule, such a Gaussian quadrature~\cite{Karvonen2017a,Karvonen2018c}. Analysis of the weights and their positivity naturally reduces to that of the reproduced classical rule. 
\item There is convincing numerical evidence that the weights are positive if the nodes for the Gaussian kernel and measure on $\mathbb{R}$ are selected by suitable scaling the classical Gauss--Hermite nodes~\cite{Karvonen2018b}.
\item Uniform weighting (i.e., $w_{X,i}^\textsm{BQ} = 1/n$) can be achieved when certain quasi-Monte Carlo point sets and shift invariant kernels are used~\cite{Jagadeeswaran2018}.
\end{itemize}

\subsection{Upper bound on the sum of weights}\label{sec:positivity-sums}

We summarise below a simple yet generic result that has an important consequence on the stability of Bayesian quadrature in Section \ref{sec:stability}.
\begin{lemma} \label{lemma:weight-sum-generic}
Let $\Omega \subset \mathbb{R}^d$.  
If the Bayesian quadrature weights $w_{X,1}^\textsm{BQ},\dots,w_{X,n}^\textsm{BQ}$ are non-negative, then we have
\begin{equation*}
\sum_{i=1}^n w_{X,i}^\textsm{BQ}  \leq \frac{\sup_{\b{x} \in \Omega} I_\nu(k_{\b{x}})}{\inf_{\b{x},\b{x}' \in \Omega} k(\b{x},\b{x}')}.
\end{equation*}
\end{lemma}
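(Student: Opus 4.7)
The plan is to exploit the exactness property \eqref{eq:BQ-exactness}, which states that for every $j = 1, \ldots, n$,
\begin{equation*}
\sum_{i=1}^n k(\b{x}_j, \b{x}_i) \, w_{X,i}^\textsm{BQ} = k_\nu(\b{x}_j) = I_\nu(k_{\b{x}_j}).
\end{equation*}
This is the only structural fact about the Bayesian quadrature weights that I expect to need, and it is the natural tool because it ties a weighted combination of the weights to the kernel mean, which is exactly the quantity appearing in the numerator of the bound.

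Next, I would fix an arbitrary index $j$ and use non-negativity of the weights, together with the elementwise bound $k(\b{x}_j, \b{x}_i) \geq c$ with $c \coloneqq \inf_{\b{x}, \b{x}' \in \Omega} k(\b{x}, \b{x}')$, to obtain
\begin{equation*}
c \sum_{i=1}^n w_{X,i}^\textsm{BQ} \;\leq\; \sum_{i=1}^n k(\b{x}_j, \b{x}_i) \, w_{X,i}^\textsm{BQ} \;=\; I_\nu(k_{\b{x}_j}) \;\leq\; \sup_{\b{x} \in \Omega} I_\nu(k_{\b{x}}).
\end{equation*}
Assuming $c > 0$ (otherwise the right-hand side of the claimed bound is interpreted as $+\infty$ and the statement is vacuous), dividing through by $c$ yields the asserted inequality.

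There is no serious obstacle here; the entire argument is a direct consequence of exactness combined with the sign hypothesis on the weights. The only subtlety worth flagging is the implicit assumption $\inf k > 0$: for standard examples such as the Gaussian, Matérn, or Hardy kernels on a bounded $\Omega$, this infimum is strictly positive, so the bound is meaningful in the settings relevant to the rest of the paper. I would therefore present the proof in a few lines, emphasising that the step $\sum_i k(\b{x}_j, \b{x}_i) w_i \geq c \sum_i w_i$ is the sole place where non-negativity of the weights enters, which clarifies why such a clean bound on $\sum_i w_{X,i}^\textsm{BQ}$ requires the positivity results developed in the earlier subsections.
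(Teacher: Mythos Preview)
Your argument is correct and is exactly the approach the paper takes: the paper's proof simply says the claim ``immediately follows'' from the exactness property~\eqref{eq:BQ-exactness}, and you have spelled out precisely the one-line computation that justifies this. Your remark about the implicit assumption $\inf k > 0$ is a fair clarification that the paper leaves unstated.
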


\begin{proof}
The claim immediately follows from the property~\eqref{eq:BQ-exactness} that
$\sum_{i=1}^n w_{X,i}^\textsm{BQ} k_{\b{x}_j}(\b{x}_i) = I_\nu(k_{\b{x}_j})$ for each $j=1,\ldots,n$. \qed

\end{proof}

Combined with Corollary \ref{cor:optimal-weights}, we get a bound on the sum of absolute weights $ \sum_{i=1}^n |w_{X_n,i}^\textsm{BQ}|$, which is the main topic of discussion in Section \ref{sec:stability}.
\begin{corollary}
\label{cor:optimal-weights-sum} 
Let $\Omega = [a,b] \subset \mathbb{R}$.
If $k \in C^\infty([a,b]^2)$ is totally positive of order 2 and design points $X \in \mathcal{S}^n$ are locally optimal, then we have 
$$
 \sum_{i=1}^n |w_{X,i}^\textsm{BQ}| = \sum_{i=1}^n w_{X,i}^\textsm{BQ} \leq \frac{\sup_{x \in [a,b]} I_\nu(k_x)}{\inf_{x,x' \in [a,b]}  k(x,x')} . 
$$
\end{corollary}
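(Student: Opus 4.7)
The plan is to combine the two ingredients that immediately precede the statement: Corollary~\ref{cor:optimal-weights}, which supplies positivity of the weights, and Lemma~\ref{lemma:weight-sum-generic}, which gives the upper bound on their sum whenever they are non-negative. The corollary is essentially a direct conjunction of these two results, so no new analytic work is required; the only task is to verify that the hypotheses needed by each of them are supplied by the hypotheses of the corollary.

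First I would invoke Corollary~\ref{cor:optimal-weights}. Its hypotheses are that $k \in C^\infty([a,b]^2)$ is totally positive of order~2 and that $X \in \mathcal{S}^n$ is locally optimal, which are exactly the hypotheses of Corollary~\ref{cor:optimal-weights-sum}. This yields $w_{X,i}^\textsm{BQ} > 0$ for every $i = 1, \ldots, n$, and hence
\begin{equation*}
\sum_{i=1}^n \lvert w_{X,i}^\textsm{BQ} \rvert = \sum_{i=1}^n w_{X,i}^\textsm{BQ}.
\end{equation*}

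Next I would apply Lemma~\ref{lemma:weight-sum-generic}. The lemma only requires the weights to be non-negative and applies on any $\Omega \subset \mathbb{R}^d$, in particular on $\Omega = [a,b] \subset \mathbb{R}$. Since positivity has just been established, the lemma yields
\begin{equation*}
\sum_{i=1}^n w_{X,i}^\textsm{BQ} \leq \frac{\sup_{x \in [a,b]} I_\nu(k_x)}{\inf_{x,x' \in [a,b]} k(x,x')},
\end{equation*}
and chaining the equality with this inequality gives the claim. There is no substantive obstacle in this argument; the only (trivial) point worth checking is that the infimum of $k$ is strictly positive, or else the bound would be vacuous, but this is not something the statement actually requires and the corollary remains formally correct even when the right-hand side is $+\infty$.
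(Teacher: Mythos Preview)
Your proposal is correct and matches the paper's own (implicit) argument: the corollary is stated immediately after Lemma~\ref{lemma:weight-sum-generic} with the remark that it follows by combining that lemma with Corollary~\ref{cor:optimal-weights}, which is precisely what you do. Your additional remark about the possibility of the right-hand side being $+\infty$ is a reasonable observation but not something the paper addresses.
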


Most importantly, Corollary \ref{cor:optimal-weights-sum} is applicable to the Gaussian kernel, for which the upper-bound is finite.
This result will be discussed in Section \ref{sec:Runge-phenomenon} in more detail.
One may see a supporting evidence in Fig.~\ref{fig:optimal-2d}, where the sum of weights seems to converge to a value around $1$.

\subsection{Higher dimensions} \label{sec:multi-positive}

As far as we are aware of, there are no extensions of the theory of Chebyshev systems to higher dimensions. Consequently, it is not possible to say much about positivity of the weights when $d > 1$. Some simple cases can be analysed, however.

Let $\Omega_1 = [a,b]$, $\nu_1$ be a measure on $\Omega_1$, $\Omega = \Omega_1^d \subset \mathbb{R}^d$ and $\nu = \nu_1^d$. That is, $\Omega =\Omega_1 \times \cdots \times \Omega_1$ and $\dif\nu(\b{x}) = \dif \nu_1(x_1) \times \cdots \times \dif \nu_1(x_d)$, where there are $d$ terms in the products.
Suppose that
\begin{enumerate}
\item[(i)] the point set $X$ is now a Cartesian product of one-dimensional sets $X_1 = \{x_1^1,\ldots,x_n^1\}\subset \Omega_1$: $X = X_1^d$;
\item[(ii)] the kernel is of product form: $k(\b{x},\b{x}') = \prod_{i=1}^d k_1(x_i,x_i')$ for some kernel $k_1$ on $\Omega_1$.
\end{enumerate}
A quadrature rule using Cartesian product points is called a \emph{tensor product rule}.
For such points the Bayesian quadrature weights $\b{w}_X^\textsm{BQ}$ are products of the one-dimensional weights $\b{w}_{X_1}^\textsm{BQ}$: the weight for the point $(x_{i(1)},\ldots,x_{i(d)}) \in X$ is $\prod_{j=1}^d w_{X_1,i(j)}^\textsm{BQ}$~\cite[Section 2.4]{Oettershagen2017}. 
In particular, if $k_1$ is totally positive and $X_1$ is a locally optimal set of points, then all the $n^d$ weights $\b{w}_X^\textsm{BQ}$ are positive.\footnote{Note that a tensor product rule based on an optimal one-dimensional point set need not be locally optimal for $\Omega$, $\nu$ and $k$.}
Analysis of more flexible sparse grid and symmetry based methods~\cite{Karvonen2018} might yield more interesting results.

We conclude this section with two numerical examples. Both of them involve the standard Gaussian measure 
\begin{equation*}
\dif\nu(\b{x}) = (2\pi)^{-d/2} \exp\bigg( - \frac{\norm[0]{\b{x}}^2}{2} \bigg) \dif \b{x} 
\end{equation*}
on $\Omega = \mathbb{R}^d$ and the Gaussian kernel~\eqref{eq:gauss-kernel}.

\paragraph{Locally optimal points} First, we investigated positivity of weights for locally optimal points. We set $\ell = 1$ and $d = 2$ and used a gradient-based \rev{quasi-Newton} optimisation method \rev{(MATLAB's \texttt{fminunc})} to find points that locally minimise the integral variance for $n=2,\ldots,20$. \rev{Optimisation was initialised with a set of random points. The point set output by the optimiser was then randomly perturbated and optimisation repeated for 20 times, each time initialising with the point set giving the smallest Bayesian quadrature variance so far. 
The weights were always computed directly from~\eqref{eq:BQ-weights}. 
However, to improve numerical stability, the kernel matrix $\b{K}_X$ was replaced by $\b{K}_X + 10^{-6} \b{I}$, where $\b{I}$ is the $n \times n$ identity matrix, during point optimisation.}
Some point sets generated using the same algorithm have appeared in \cite[Section IV]{Sarkka2016} (for other examples of optimal points in dimension two, see \cite{OHagan1992,Minka2000} and, in particular, \cite[Chapter 6]{Oettershagen2017}). 
The point sets we obtained appear sensible and all of them are associated with positive weights; four sets and their weights are depicted in Fig.\@~\ref{fig:optimal-2d}. \rev{For $n = 20$, the maximal value of a partial derivative of $\mathbb{V}_X^\textsm{BQ}$ at the computed points was $9 \times 10^{-10}$.}

\paragraph{Random points} Secondly, we investigated the validity of Theorem \ref{thm:BQ-positivity-general} in higher dimensions. We set $\ell = 1.5$ and $d = 4$ and counted the number of positive weights for $n=2,\ldots,1000$ when each $n$-point set is generated by drawing Monte Carlo samples from $\nu$.
Random samples are often used in Bayesian quadrature~\cite{RasmussenGhahramani2002,Briol2017,Briol2017a} and they also function as a suitable test case where structurality of point sets has little role in constraining behaviour of some subsets of the weights as happens when product or symmetric point designs are used.
Fig.\@~\ref{fig:gauss-weights} shows the proportion of positive weights; it appears that at least half of the weights for randomly drawn points are always positive. This supports the obvious conjectural extension to higher dimensions of Theorem \ref{thm:BQ-positivity-general}.

\begin{figure}[t!]
\centering
  \includegraphics{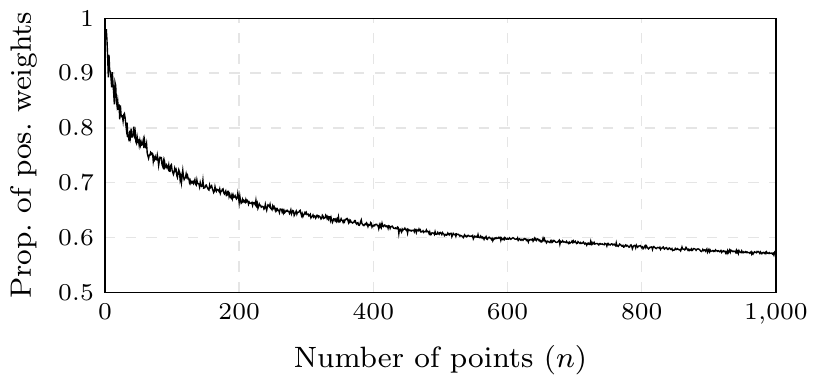}
  \caption{Proportion of positive weights for the Gaussian kernel and $n$ points drawn from the standard Gaussian distribution on $\mathbb{R}^4$. The results have been averaged over 50 independent Monte Carlo runs. Among all runs the minimal proportion encountered was exactly $1/2$.}\label{fig:gauss-weights}
\end{figure}

\section{Magnitudes of weights and the stability} \label{sec:stability}

This section studies the magnitudes of the  weights in a Bayesian quadrature rule and discusses how they are related to stability and robustness of the quadrature rule. 
We are in particular interested in the following quantity, which we call the Bayesian quadrature \emph{stability constant}:
\begin{equation} \label{eq:stability-const}
\Lambda_{X_n}^\textsm{BQ} \coloneqq \sum_{i=1}^n \abs[0]{ w_{X_n,i}^\textsm{BQ} }.
\end{equation}
To make dependency on $n$ more explicit, the quadrature point set is denoted by $X_n$ instead of $X$ in this section.
The terminology is motivated by the close connection of $\Lambda_{X_n}^\textsm{BQ}$ to the \emph{Lebesgue constant} $\Lambda_{X_n}$, a quantity that characterises the stability of an interpolant. For kernel interpolants the Lebesgue constant is
\begin{equation*}
\Lambda_{X_n} \coloneqq \sup_{\b{x} \in \Omega} \sum_{i=1}^n \abs[0]{u_{X_n,i}(\b{x})},
\end{equation*}
where $u_{X_n,i}$ are Lagrange cardinal functions from Section \ref{sec:bq-basics}.
The connection to \eqref{eq:stability-const} arises from the fact that $w_{X_n,i}^\textsm{BQ} = I_\nu(u_{X_n,i})$ for $i=1,\dots,n$.

\rev{The importance of the stability constant~\eqref{eq:stability-const} is illustrated by the following argument. 
Let $\mu_f^*$ be an optimal approximant to the integrand function $f \colon \Omega \to \mathbb{R}$ in the span of $\{ k_{\b{x}_i}\}_{i=1}^n$ in the sense that
\begin{equation*}
    \mu_{f}^* \in \argmin_{ \mu_f \in \mathrm{span} \{ k_{\b{x}_i}\}_{i=1}^n} \, \norm[0]{f-\mu_f}_\infty,
\end{equation*}
where $\norm[0]{f-\mu_f}_\infty \coloneqq \sup_{ \b{x} \in \Omega} \abs[0]{f(\b{x}) - \mu_f(\b{x})}$ is the uniform norm.
Note that $\mu_f^*$ does not in general interpolate $f$ at $X_n$ nor coincide with the Gaussian process posterior mean $\mu_{X,f}$. Then
\begin{equation*}
    \begin{split}
        \abs[0]{I_\nu(f) - I_{X_n}^\textsm{BQ}(f)} \hspace{-2cm}& \\
        &\leq \abs[0]{I_\nu(f) - I_\nu(\mu_f^*)} + \abs[0]{I_\nu(\mu_f^*) - I_{X_n}^\textsm{BQ}(f)} \\
        &= \abs[0]{I_\nu(f) - I_\nu(\mu_f^*)} + \abs[0]{I_{X_n}^\textsm{BQ}(\mu_f^*) - I_{X_n}^\textsm{BQ}(f)} \\
        &\leq \norm[0]{f-\mu_f^*}_\infty + \sum_{i=1}^n \abs[0]{w_{X_n,i}^\textsm{BQ}} \abs[0]{\mu_f^*(\b{x}_i) - f(\b{x}_i)} \\
        &\leq (1 + \Lambda_{X_n}^\textsm{BQ}) \norm[0]{f-\mu_f^*}_\infty,
    \end{split}
\end{equation*}
where we have used the fact that $I_{X_n}^\textsm{BQ}(g) = I_\nu(g)$ if $g \in \mathrm{span} \{ k_{\b{x}_i}\}_{i=1}^n$. That is, the approximation error by a Bayesian quadrature rule can be related to that by the best uniform approximant via the stability constant.
}
\rev{The stability constant also controls the error introduced by inaccurate funtion evaluations.}
Suppose that the function evaluations contain  errors (which may be numerical or stochastic), denoted by $\epsilon_i$ and modelled as indendent zero-mean random variables with variance $\sigma^2$. 
Then the mean-square error (where the expectation is w.r.t.~$\epsilon_1,\dots,\epsilon_n$) of Bayesian quadrature is given by
\begin{equation*}
\begin{split}
\mathbb{E} &\Bigg[ \bigg( I_\nu(f) - \sum_{i=1}^n  w_{X_n,i}^\textsm{BQ} [f(\b{x}_i) + \epsilon_i] \bigg)^2 \Bigg] \\
&= \bigg( I_\nu(f) - \sum_{i=1}^n  w_{X_n,i}^\textsm{BQ} f(\b{x}_i) \bigg)^2 + \sigma^2 \sum_{i=1}^n ( w_{X_n,i}^\textsm{BQ})^2 \\
&\leq \bigg( I_\nu(f) - \sum_{i=1}^n  w_{X_n,i}^\textsm{BQ} f(\b{x}_i) \bigg)^2 + \sigma^2 \bigg(\sum_{i=1}^n \abs[0]{ w_{X_n,i}^\textsm{BQ}} \bigg)^2.
\end{split}
\end{equation*}
This implies a small stability constant \eqref{eq:stability-const} suppresses the additional error caused by the perturbations $\epsilon_i$.
A third motivating example will be given in Section \ref{sec:sobolev-convergence}, after introducing necessary notation.

\rev{It is clear from Lemma~\ref{lemma:weight-sum-generic} that if the weights are positive for every $n$, the stability constant remains uniformly bounded. However, the results on positivity in the preceding section are valid only when $d = 1$ and the kernel is totally positive. This section uses a different technique to analyse the stability constant.}
The results are based on those in \cite{DeMarchiSchaback2010}, which are applicable to kernels that induce Sobolev-equivalent RKHSs (e.g., Mat\'{e}rn kernels). Accordingly, we mainly focus on such kernels in this section.
We begin by reviewing basic properties of Sobolev spaces in Section \ref{sec:Sobolev} and convergence results for Bayesian quadrature in Section \ref{sec:sobolev-convergence}.
The main results, Theorem \ref{thm:bound-weight} and Corollary \ref{cor:stability}, on the magnitudes of quadrature weights and the stability constant appear in Section \ref{sec:stability-sobolev}.
We discuss a relevant stability issue, known as the Runge phenomenon, for infinitely smooth kernels such as the Gaussian kernel in Section \ref{sec:Runge-phenomenon}.
Finally, simulation results in Section \ref{sec:stability-examples} demonstrate that the obtained upper bound is conservative; there is much room for improving the results.

\paragraph{Notation and basic definitions.} 

The Fourier transform $\hat{f}$ of a Lebesgue integrable $f \colon \mathbb{R}^d \to \mathbb{R}$ is defined by
\begin{equation*}
    \hat{f}(\b{\xi}) \coloneqq (2\pi)^{-d/2} \int_{\mathbb{R}^d} f(\b{x}) \mathrm{e}^{-\sqrt{-1}\ \b{\xi}^\T \b{x}} \dif \b{x}, \quad \b{\xi} \in \mathbb{R}^d.
\end{equation*}
Two normed vector spaces $\mathcal{F}_1$ and $\mathcal{F}_2$ are {\em norm-equivalent} if $\mathcal{F}_1 = \mathcal{F}_2$ as a set and there exist constants $C_1, C_2 > 0$ such that
\begin{equation*}
C_1 \norm[0]{f}_{\mathcal{F}_2} \leq \norm[0]{f}_{\mathcal{F}_1} \leq C_2 \norm[0]{f}_{\mathcal{F}_2} \quad  \text{for all} \quad f \in \mathcal{F}_1.
\end{equation*}

\subsection{Kernels inducing Sobolev-equivalent RKHSs} \label{sec:Sobolev}

\rev{
Let $\Phi \colon \mathbb{R}^d \to \mathbb{R}$ be a continuous and integrable positive-definite function with Fourier transform satisfying
\begin{equation} \label{eq:kernel-cond-sobolev}
c_1 (1+\norm[0]{\b{\xi}}^2)^{-r} \leq  \hat{\Phi}(\b{\xi}) \leq c_2 (1+\norm[0]{\b{\xi}}^2)^{-r}
\end{equation}
for $r > d/2$, some positive constants $c_1$ and $c_2$, and for all $\b{\xi} \in \mathbb{R}^d$.
In this section, we consider shift-invariant kernels on $\mathbb{R}^d$ of the form $k(\b{x},\b{x}') = \Phi( \b{x} - \b{x}')$.
For instance, a Mat\'{e}rn kernel \cite[Section 4.2.1]{Rasmussen2006}
\begin{equation*}\label{eq:matern-kernel}
k_\rho(\b{x},\b{x}') = \frac{2^{1-\rho}}{\Gamma(\rho)} \bigg( \frac{\sqrt{2\rho}\norm[0]{\b{x}-\b{x}'}}{\ell} \bigg)^\rho \mathrm{K}_\rho\bigg( \frac{\sqrt{2\rho}\norm[0]{\b{x}-\b{x}'}}{\ell} \bigg)
\end{equation*}
with smoothness parameter $\rho := r - d/2$ and length-scale parameter $\ell > 0$ satisfies \eqref{eq:kernel-cond-sobolev}.\footnote{\rev{Note that the smoothness parametrisation $\rho = r$ is often used. With this parametrisation $k_\rho$ would satisfy~\eqref{eq:kernel-cond-sobolev} with the exponent $-(r+d/2)$ and its RKHS would be norm-equivalent to $H^{r+d/2}(\mathbb{R}^d)$.}} Here $\mathrm{K}_\rho$ is the modified Bessel function of the second kind of order~$\rho$. 
Another notable class of kernels satisfying \eqref{eq:kernel-cond-sobolev} are Wendland kernels  \cite[Theorem 10.35]{Wendland2005}.
}

By \cite[Corollary 10.13]{Wendland2005}, the RKHS $\mathcal{H}(k)$ of any kernel $k$ satisfying \eqref{eq:kernel-cond-sobolev} is norm-equivalent to the {\em Sobolev space} $H^r(\mathbb{R}^d)$ of order $r > d/2$ on $\mathbb{R}^d$, which is a Hilbert space consisting of square-integrable and continuous functions $f \colon \mathbb{R}^d \to \mathbb{R}$ such that
\begin{equation*}
    \| f \|_{H^r(\mathbb{R}^d)}^2 := (2\pi)^{-d/2} \int_{\mathbb{R}^d} \big( 1 + \norm[0]{\b{\xi}}^2 \big)^r \abs[0]{\hat{f}(\b{\xi})}^2 \dif \b{\xi} < \infty.
\end{equation*} 
As can be seen from this expression, $r$ quantifies the smoothness of functions in $H^r(\mathbb{R}^d)$: as $r$ increases, function in $H^r(\mathbb{R}^d)$ become smoother.

The Sobolev space $H^r(\Omega)$ on a general measurable domain $\Omega \subset \mathbb{R}^d$ can be defined as the restriction of $H^r(\mathbb{R}^d)$ onto $\Omega$.
The kernel $k$ satisfying \eqref{eq:kernel-cond-sobolev}, when seen as a kernel on $\Omega$, then induces an RKHS that is norm-equivalent to $H^r (\Omega)$ \cite[Theorems 10.12, 10.46 and 10.47]{Wendland2005}.\footnote{The reader may ask whether $\Omega$ needs to have a Lipschitz boundary for this norm-equivalence, but this assumption is indeed not needed. 
The assumption that $\Omega$ has a Lipschitz boundary is required when using {\em Stein's extension theorem} \cite[p. 181]{Ste70} for Sobolev spaces defined using weak derivatives (see the proof of \cite[Corollary 10.48]{Wendland2005}). 
On the other hand, we consider here a Sobolev space defined in terms of the Fourier transform, and the norm-equivalence follows from the extension and restriction theorems for a generic RKHS \cite[Theorems 10.46 and 10.47]{Wendland2005} and the expression of the RKHS norm in terms of Fourier transforms \cite[Theorem 10.12]{Wendland2005}.}

\subsection{Convergence for Sobolev-equivalent kernels} \label{sec:sobolev-convergence}

Recall from Section \ref{sec:RKHS} that integration error by a Bayesian quadrature rule for functions in $\mathcal{H}(k)$ satisfies
\begin{equation*}
\abs[1]{I_\nu(f) - I_{X_n}^\textsm{BQ}(f)} \leq \norm[0]{f}_{\mathcal{H}(k)} e_{\mathcal{H}(k)}({X_n}, \b{w}_X^\textsm{BQ}),
\end{equation*}
so that in convergence analysis only the behaviour of the worst-case error needs to be considered.
If the RKHS is norm-equivalent to a Sobolev space, rates of convergence for Bayesian quadrature can be established. These results follow from~\cite[Corollary~4.1]{Arcangeli2007}. See~\cite[Corollary 11.33]{Wendland2005} or~\rev{\cite[Proposition~3.6]{WendlandRieger2005}} for earlier and slightly more restricted results that require $\lfloor r \rfloor > d/2$ and~\cite[Proposition 4]{Kanagawa2018} for a version specifically for numerical integration. Some assumptions, satisfied by all domains of interest to us, are needed; see for instance \cite[Section~3]{Kanagawa2018} for precise definitions. 

\begin{assumption} \label{ass:Omega}  The set $\Omega \subset \mathbb{R}^d$ is a bounded open set that satisfies an interior cone condition and has a Lipschitz boundary.
\end{assumption}

This assumption essentially says that the boundary of $\Omega$ is sufficiently regular (Lipschitz boundary) and that there is no ``pinch point'' on the boundary of $\Omega$ (interior cone condition).
Convergence results are expressed in terms of the \emph{fill-distance}
\begin{equation*}
h_{X_n,\Omega} \coloneqq \sup_{\b{x} \in \Omega} \min_{i=1,\ldots,n} \norm[0]{\b{x}-\b{x}_i}
\end{equation*}
that quantifies the size of the largest ``hole'' in an $n$-point set $X_n$. We use $\lesssim$ to denote an inequality that is valid up to a constant independent of $n$, number of points, and $f$, the integrand. That is, for generic sequences of functionals $g_n$ and $h_n$, $g_{n}(f) \lesssim h_{n}(f)$ means that there is a constant $C > 0$ such that $g_{n}(f) \leq C h_n(f)$ for all $n \in \mathbb{N}$ and any $f$ in a specified class of functions.

\begin{theorem} \label{thm:convergence} Suppose that (i) $\Omega$ satisfies Assumption \ref{ass:Omega} (ii) that the measure $\nu$ has a bounded (Lebesgue) density function and that (iii) the kernel $k$ satisfies \eqref{eq:kernel-cond-sobolev} for a constant $r$ such that $r > d/2$. Then
\begin{equation*}
\abs[1]{I_\nu(f) - I_{X_n}^\textsm{BQ}(f)} \lesssim \norm[0]{f}_{H^{r}(\Omega)} h_{X_n,\Omega}^r
\end{equation*}
for any $f \in H^{r}(\Omega)$ when the fill-distance is sufficiently small.
\end{theorem}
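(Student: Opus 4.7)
The plan is to reduce the quadrature error to an $L^2$ scattered-data approximation estimate for the kernel interpolant and then invoke a classical result from the scattered data literature. First I would observe that, since the Bayesian quadrature estimate is by construction the integral of the Gaussian process posterior mean, one has $I_{X_n}^\textsm{BQ}(f) = I_\nu(\mu_{X_n, f})$ for every $f$; this is immediate from the definition~\eqref{eq:BQ-weights} of the weights together with~\eqref{eq:GP-posterior-mean}. Hence the quadrature error coincides with the integral of the interpolation error,
\begin{equation*}
I_\nu(f) - I_{X_n}^\textsm{BQ}(f) = I_\nu(f - \mu_{X_n, f}).
\end{equation*}

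Writing $I_\nu(g) = \int_\Omega g(\b{x}) p(\b{x}) \dif \b{x}$ in terms of the bounded density $p$ of $\nu$ and applying the Cauchy--Schwarz inequality on the bounded domain $\Omega$ yields
\begin{equation*}
\abs{I_\nu(f) - I_{X_n}^\textsm{BQ}(f)} \leq \norm{p}_\infty \abs{\Omega}^{1/2} \norm{f - \mu_{X_n, f}}_{L^2(\Omega)}.
\end{equation*}
All that remains is to control the $L^2$ error of the kernel interpolant at the rate $h_{X_n, \Omega}^r$. Under Assumption~\ref{ass:Omega} and the norm-equivalence of $\mathcal{H}(k)$ and $H^r(\Omega)$ recalled in Section~\ref{sec:Sobolev}, the $L^2$ version of the Sobolev-type scattered-data estimate of~\cite{Arcangeli2007} provides
\begin{equation*}
\norm{f - \mu_{X_n, f}}_{L^2(\Omega)} \lesssim h_{X_n, \Omega}^r \norm{f}_{H^r(\Omega)}
\end{equation*}
for every $f \in H^r(\Omega)$, provided $h_{X_n, \Omega}$ is below a threshold depending only on $\Omega$ and $r$. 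Chaining the three displays gives the claim.

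The crux is the $L^2$ bound at the sharp rate $h^r$. The naive pointwise estimate via the power function, which would give $\norm{f - \mu_{X_n, f}}_\infty \lesssim h_{X_n, \Omega}^{r-d/2} \norm{f}_{H^r(\Omega)}$, would only yield the suboptimal quadrature rate $h^{r-d/2}$, losing a factor $h^{d/2}$ relative to the theorem. Recovering the full rate requires the duality and extension arguments developed in~\cite{Arcangeli2007}, and fully exploits the regularity of the boundary (Lipschitz boundary together with the interior cone condition) built into Assumption~\ref{ass:Omega}; this is where essentially all the technical difficulty of the proof is concentrated, while the reduction to this estimate via the bounded-density/Cauchy--Schwarz argument above is elementary.
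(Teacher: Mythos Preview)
The paper does not supply its own proof of this theorem; it states the result and simply points to the literature, chiefly \cite[Corollary~4.1]{Arcangeli2007} (with \cite[Corollary 11.33]{Wendland2005}, \cite[Proposition~3.6]{WendlandRieger2005} and \cite[Proposition 4]{Kanagawa2018} as alternatives). Your argument---writing the quadrature error as the integral of the interpolation residual, bounding it by $\norm{p}_\infty\,\abs{\Omega}^{1/2}\,\norm{f-\mu_{X_n,f}}_{L^2(\Omega)}$ via the bounded density and Cauchy--Schwarz, and then invoking the $L^2$ sampling inequality of \cite{Arcangeli2007} together with the norm equivalence $\mathcal{H}(k)\cong H^r(\Omega)$---is precisely the standard way those references deliver the bound, and it is correct. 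In effect you have written out the argument the paper only alludes to; your remark that the naive power-function bound would lose a factor $h^{d/2}$ is also accurate and explains why the sharp sampling inequality (and hence the boundary regularity in Assumption~\ref{ass:Omega}) is genuinely needed.
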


The following simple result is an immediate consequence of this theorem.

\begin{proposition} \label{prop:sum-to-one} Suppose that the assumptions of Theorem \ref{thm:convergence} are satisfied. Then $\abs[0]{1 - \sum_{i=1}^n w_{X_n,i}^\textsm{BQ}} \lesssim h_{X_n,\Omega}^r$  when the fill-distance is sufficiently small.
\end{proposition}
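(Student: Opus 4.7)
The plan is to apply Theorem~\ref{thm:convergence} to the constant integrand $f \equiv 1$. Since $\nu$ is a Borel probability measure on $\Omega$, we have $I_\nu(1) = 1$, and by definition of the quadrature rule, $I_{X_n}^\textsm{BQ}(1) = \sum_{i=1}^n w_{X_n,i}^\textsm{BQ}$. Therefore
\begin{equation*}
\Bigl| 1 - \sum_{i=1}^n w_{X_n,i}^\textsm{BQ} \Bigr| = \bigl| I_\nu(1) - I_{X_n}^\textsm{BQ}(1) \bigr|,
\end{equation*}
and the claimed bound will follow immediately from Theorem~\ref{thm:convergence} provided the hypotheses of that theorem apply to the constant function.

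The only thing to check is that the constant function $1$ genuinely lies in the Sobolev space $H^r(\Omega)$, so that the right-hand side of Theorem~\ref{thm:convergence} is finite. Because Assumption~\ref{ass:Omega} guarantees that $\Omega$ is bounded (and has a Lipschitz boundary), the constant function and all its weak derivatives are square-integrable on $\Omega$, so $\|1\|_{H^r(\Omega)}$ is finite; equivalently, $1$ is the restriction to $\Omega$ of any smooth compactly supported extension in $H^r(\mathbb{R}^d)$. Thus Theorem~\ref{thm:convergence} yields
\begin{equation*}
\bigl| I_\nu(1) - I_{X_n}^\textsm{BQ}(1) \bigr| \lesssim \|1\|_{H^r(\Omega)}\, h_{X_n,\Omega}^r \lesssim h_{X_n,\Omega}^r,
\end{equation*}
for sufficiently small fill-distance, which is precisely the desired conclusion.

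There is no substantive obstacle here; the proof is essentially a one-line application of the convergence theorem, with the slight pedantic point being the verification that the constant function belongs to $H^r(\Omega)$, which is a direct consequence of the boundedness of $\Omega$ built into Assumption~\ref{ass:Omega}. Conceptually, the statement is simply saying that because Bayesian quadrature integrates constants to leading order well whenever it integrates anything in $H^r(\Omega)$ well, the weights must approximately sum to one at the rate $h_{X_n,\Omega}^r$.
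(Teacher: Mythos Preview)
Your proof is correct and follows exactly the same approach as the paper: apply Theorem~\ref{thm:convergence} to the constant function $f\equiv 1$, noting that constants lie in $H^r(\Omega)$ because $\Omega$ is bounded. The paper's proof is in fact a one-liner; your version just spells out the verification that $1\in H^r(\Omega)$ a bit more explicitly.
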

\begin{proof} Under the assumptions, constant functions are in $H^{r}(\Omega)$. Setting $f \equiv 1$ in~\eqref{eq:quasi-unif-conv} verifies the claim. \qed
\end{proof}

\rev{Note that the same argument can be used whenever a general rate of convergence for functions in an RKHS is known and constant functions are contained in the RKHS. However, this is not always the case; for example, the RKHS of the Gaussian kernel~\eqref{eq:gauss-kernel} does not contain polynomials~\cite[Theorem~2]{Minh2010}.}

Rates explicitly dependent on the number of points are achieved for point sets that are \emph{quasi-uniform}, which is to say that
\begin{equation*} \label{eq:quasi-uniform}
\tilde{c}_1 q_{X_n} \leq h_{X_n,\Omega} \leq \tilde{c}_2 q_{X_n}
\end{equation*}
for some constants $\tilde{c}_1, \tilde{c}_2 > 0$ independent of $n$. Here
\begin{equation*}
q_X \coloneqq \frac{1}{2} \min_{i \neq j} \norm[0]{\b{x}_i - \b{x}_j}
\end{equation*}
is the \emph{separation distance}. In dimension $d$ quasi-uniform sets satisfy $h_{X_n,\Omega} = \mathcal{O}(n^{-1/d})$ as $n \to \infty$ (e.g., regular product grids). In Theorem~\ref{thm:convergence} we thus obtain the rate
\begin{equation}\label{eq:quasi-unif-conv}
\abs[0]{I_\nu(f) - I_{X_n}^\textsm{BQ}(f)} \lesssim n^{-r/d}
\end{equation}
for $f \in H^{r}(\Omega)$ when the point sets are quasi-uniform and $n$ is sufficiently large. 

Of course, it is the stability constant $\Lambda_{X_n}^\textsm{BQ} = \sum_{i=1}^n \abs[0]{w_{X_n,i}^\textsm{BQ}}$ that we analyse next whose behaviour is typically more consequential. 
However, the above proposition may be occasionally interesting if one desires to interpret Bayesian quadrature as a weighted Dirac approximation $\nu_\textsm{BQ} \coloneqq \sum_{i=1}^n w_{X_n,i}^\textsm{BQ} \delta_{\b{x}_i} \approx \nu$ of a probability measure (i.e., $\nu_\textsm{BQ}(\Omega) \approx 1$). 
Note that there is also a simple way to ensure summing up to one of the weights by inclusion of a non-zero prior mean function for the Gaussian process prior; see~\cite{OHagan1991} and \cite[Section 2.3]{Karvonen2018c}

Finally, we provide a third example that highlights the importance of analysing the stability constant.
Kanagawa et al. \cite[Section 4.1]{Kanagawa2018} (see also~\cite{Kanagawa2016}) studied convergence rates of kernel-based quadrature rules in Sobolev spaces when the integrand is potentially {\em rougher} (i.e., $f \in H^{s}(\Omega)$ for some $s \leq r$) than assumed.
If $s < r$, the integrand $f$ may not belong to the Sobolev space $H^{r}(\Omega)$ that is assumed by the user when constructing the quadrature rule; therefore this is a {\em misspecifed} setting.
Under certain conditions, they showed~\cite[Corollary 7]{Kanagawa2018} that if $\Lambda_{X_n}^\textsm{BQ} \lesssim n^c$ for a constant $c \geq 0$, then
\begin{equation} \label{eq:kanagawa-conv}
\abs[1]{I_\nu(f) - I_{X_n}^\textsm{BQ}(f)} \lesssim n^{-s/d + c(r-s)/r},
\end{equation}
when $X_n$ are quasi-uniform.

The condition $\Lambda_{X_n}^\textsm{BQ} \lesssim n^c$ means that the stability constant $\Lambda_{X_n}^\textsm{BQ}$ should not grow quickly as $n$ increases.
The bound \eqref{eq:kanagawa-conv} shows that the error in the misspecified setting becomes small if $c$ is small. 
This implies that if the stability constant $\Lambda_{X_n}^\textsm{BQ}$ does not increase quickly, then the quadrature rule becomes robust against the misspecification of a prior.
This provides a third motivation for understanding the behaviour of $\Lambda_{X_n}^\textsm{BQ}$.

\subsection{Upper bounds for absolute weights} \label{sec:stability-sobolev}

We now analyse magnitudes of individual weights and the stability constant \eqref{eq:stability-const}. 
We first derive an upper bound on the magnitude of each weight $w_{X_n,i}^\textsm{BQ}$.
The proof of this result is based on an upper bound on the $L^2(\Omega)$ norm of Lagrange functions derived in~\cite{DeMarchiSchaback2010}.

\begin{theorem}  \label{thm:bound-weight}
Suppose that (i) $\Omega$ satisfies Assumption \ref{ass:Omega}, that (ii) the measure $\nu$ has a bounded (Lebesgue) density function and that (iii) the kernel $k$ satisfies \eqref{eq:kernel-cond-sobolev} for a constant $r$ such that $ r > d/2$.
Then
\begin{equation} \label{eq:bound-abs-weight}
    \abs[0]{ w_{X_n,i}^\textsm{BQ} } \lesssim \bigg( \frac{h_{X_n,\Omega}}{q_{X_n}}\bigg)^{r-d/2} h_{X_n,\Omega}^{d/2}
\end{equation}
for all $i=1,\dots,n$, provided that $h_{X_n,\Omega}$ is sufficiently small. 
When $X_n$ are quasi-uniform, this becomes
\begin{equation} \label{eq:bound-weight-quasi}
\abs[0]{ w_{X_n,i}^\textsm{BQ} } \lesssim n^{-1/2}
\end{equation}
for $n$ large enough.
\end{theorem}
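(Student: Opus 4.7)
The plan is to reduce the bound on $|w_{X_n,i}^{\textsm{BQ}}|$ to a bound on the $L^2(\Omega)$ norm of the $i$th Lagrange cardinal function $u_{X_n,i}$, and then invoke the stability estimate for Lagrange functions in Sobolev-equivalent RKHSs established by De Marchi and Schaback~\cite{DeMarchiSchaback2010}.

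First, I would use the identity $w_{X_n,i}^{\textsm{BQ}} = I_\nu(u_{X_n,i})$ from the end of Section~\ref{sec:bq-basics}. Since $\nu$ admits a bounded Lebesgue density $p$, this yields
\begin{equation*}
\abs[0]{w_{X_n,i}^{\textsm{BQ}}} = \abs[0]{I_\nu(u_{X_n,i})} \leq \norm[0]{p}_\infty \norm[0]{u_{X_n,i}}_{L^1(\Omega)}.
\end{equation*}
Because $\Omega$ is bounded under Assumption~\ref{ass:Omega}, the Cauchy--Schwarz inequality then gives $\norm[0]{u_{X_n,i}}_{L^1(\Omega)} \leq \abs[0]{\Omega}^{1/2} \norm[0]{u_{X_n,i}}_{L^2(\Omega)}$, so that the task is reduced to controlling the $L^2$ norm of a single Lagrange function.

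Second, I would invoke the $L^2$ stability bound of De Marchi and Schaback~\cite{DeMarchiSchaback2010}: under Assumption~\ref{ass:Omega} and condition \eqref{eq:kernel-cond-sobolev} with $r>d/2$, provided that $h_{X_n,\Omega}$ is sufficiently small,
\begin{equation*}
\norm[0]{u_{X_n,i}}_{L^2(\Omega)} \lesssim \bigg( \frac{h_{X_n,\Omega}}{q_{X_n}}\bigg)^{r-d/2} h_{X_n,\Omega}^{d/2}.
\end{equation*}
Here the factor $(h/q)^{r-d/2}$ captures sensitivity to the mesh ratio, while the additional $h^{d/2}$ reflects the effective localisation of $u_{X_n,i}$ in a ball of radius $\mathcal{O}(h_{X_n,\Omega})$ around $\b{x}_i$. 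Substituting into the reduction above delivers the first assertion~\eqref{eq:bound-abs-weight}.

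For the quasi-uniform case, the mesh ratio $h_{X_n,\Omega}/q_{X_n}$ is uniformly bounded in $n$ by definition, and $h_{X_n,\Omega} \asymp n^{-1/d}$ (as noted after~\eqref{eq:quasi-uniform}), so $h_{X_n,\Omega}^{d/2} \asymp n^{-1/2}$, yielding \eqref{eq:bound-weight-quasi}. The main obstacle in this programme is that invoking only an $L^\infty$ bound on $u_{X_n,i}$ would produce just $(h/q)^{r-d/2}$ and lose the decisive $h^{d/2}$ factor; hence it is essential to appeal specifically to the $L^2$ form of the De Marchi--Schaback estimate, which exploits the near-local support of the Lagrange function rather than a crude worldwide supremum.
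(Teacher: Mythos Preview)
Your proposal is correct and follows essentially the same route as the paper: both use $w_{X_n,i}^{\textsm{BQ}} = I_\nu(u_{X_n,i})$, reduce to $\norm[0]{u_{X_n,i}}_{L^2(\Omega)}$, and then invoke the De~Marchi--Schaback $L^2$ bound on Lagrange functions. The only cosmetic difference is that the paper passes through $\bigl(\int u_{X_n,i}^2\,\dif\nu\bigr)^{1/2}$ using that $\nu$ is a probability measure, whereas you bound $\norm[0]{u_{X_n,i}}_{L^1(\Omega)}$ via $\abs[0]{\Omega}^{1/2}\norm[0]{u_{X_n,i}}_{L^2(\Omega)}$ using boundedness of $\Omega$; both are valid and yield the same estimate up to constants.
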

\begin{proof}
It is proved in \cite[Theorem 1]{DeMarchiSchaback2010} that each of the Lagrange functions $u_{X_n,i}$ admits the bound
\begin{equation} \label{eq:lagrange-L2-bound}
\bigg( \int_\Omega u_{X_n,i}(\b{x})^2 \dif \b{x} \bigg)^{1/2} \lesssim \bigg( \frac{h_{X_n,\Omega}}{q_{X_n}}\bigg)^{r-d/2} h_{X_n,\Omega}^{d/2},
\end{equation}
provided that $h_{X_n,\Omega}$ is sufficiently small.
Let $\norm[0]{\nu}_\infty < \infty$ stand for the supremum of the density function of $\nu$. 
Then it follows from $w_{X_n,i}^\textsm{BQ} = I_\nu(u_{X_n,i})$ that
\begin{equation*}
\begin{split}
|w_{X_n,i}^\textsm{BQ}| &\leq \int |u_{X_n,i} (\b{x})| d\nu(\b{x}) \\
&\leq \left(\int_\Omega u_{X_n,i}(\b{x})^2 \dif\nu( \b{x} )  \right)^{1/2} \\
& \leq \norm[0]{\nu}_\infty \left( \int_\Omega u_{X_n,i}(\b{x})^2 \dif \b{x} \right)^{1/2}.
\end{split}
\end{equation*}
Inequality \eqref{eq:bound-abs-weight} now follows from \eqref{eq:lagrange-L2-bound}.
When $X_n$ are quasi-uniform, the ratio $h_{X_n,\Omega}/q_{X_n}$ remains bounded and $h_{X_n,\Omega}$ behaves like $n^{-1/d}$. \qed 

\end{proof}

An important consequence of Theorem \ref{thm:bound-weight} is that the magnitudes of quadrature weights decrease uniformly to zero as $n$ increases if the design points are quasi-uniform and $\nu$ has a density.
In other words, none of the design points will have a constant weight that does not decay.
This is similar to importance sampling, where the weights decay uniformly at rate $1/n$.
As a direct corollary of Theorem \ref{thm:bound-weight} we obtain bounds on the stability constant $\Lambda_{X_n}^\textsm{BQ}$.

\begin{figure}[t]
\centering
  \includegraphics{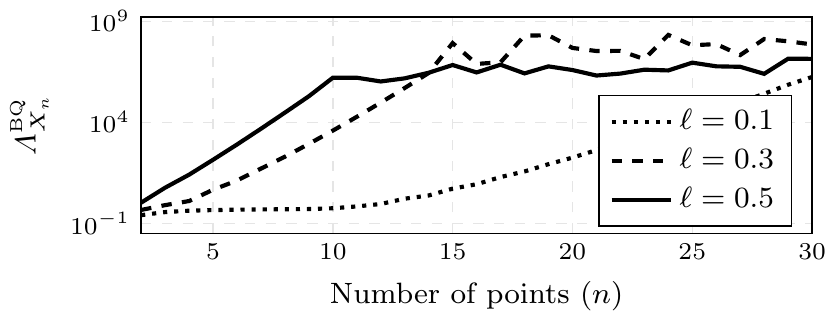}
  \caption{Bayesian quadrature stability constants for the Gaussian kernel~\eqref{eq:gauss-kernel} with different length-scales, the uniform measure on $[0,1]$ and $n$ points uniformly placed on this interval (end points not included). The levelling off appears to be caused by loss of numerical precision.
  }\label{fig:stability-gauss}
\end{figure}

\begin{corollary} \label{cor:stability} 
Under the assumptions of Theorem \ref{thm:bound-weight} and provided that $h_{X_n,\Omega}$ is sufficiently small we have
\begin{equation} \label{eq:stability-sobolev}
\Lambda_{X_n}^\textsm{BQ} \lesssim n \bigg( \frac{h_{X_n,\Omega}}{q_{X_n}}\bigg)^{r-d/2} h_{X_n,\Omega}^{d/2}.
\end{equation}
When $X_n$ are quasi-uniform and $n$ sufficiently large this becomes
\begin{equation} \label{eq:stability-sobolev-uniform}
\Lambda_{X_n}^\textsm{BQ} \lesssim \sqrt{n}.
\end{equation}
\end{corollary}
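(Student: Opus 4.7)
The proof is essentially a direct consequence of Theorem~\ref{thm:bound-weight} combined with the triangle inequality. The plan is as follows.

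First, I would apply the triangle inequality to the stability constant:
\begin{equation*}
\Lambda_{X_n}^\textsm{BQ} = \sum_{i=1}^n \abs[0]{w_{X_n,i}^\textsm{BQ}}.
\end{equation*}
By Theorem~\ref{thm:bound-weight}, provided that $h_{X_n,\Omega}$ is sufficiently small, each summand satisfies
\begin{equation*}
\abs[0]{w_{X_n,i}^\textsm{BQ}} \lesssim \bigg( \frac{h_{X_n,\Omega}}{q_{X_n}}\bigg)^{r-d/2} h_{X_n,\Omega}^{d/2},
\end{equation*}
where the implicit constant is independent of both $n$ and $i$. Summing over the $n$ indices immediately yields the bound \eqref{eq:stability-sobolev}.

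For the quasi-uniform case, I would use the two defining properties: the ratio $h_{X_n,\Omega}/q_{X_n}$ is uniformly bounded by some constant depending only on $\tilde c_1, \tilde c_2$, so the factor $(h_{X_n,\Omega}/q_{X_n})^{r-d/2}$ can be absorbed into the implicit constant. Moreover, quasi-uniformity (together with the boundedness of $\Omega$) gives $h_{X_n,\Omega} \asymp n^{-1/d}$, so that $h_{X_n,\Omega}^{d/2} \asymp n^{-1/2}$. Combining these,
\begin{equation*}
\Lambda_{X_n}^\textsm{BQ} \lesssim n \cdot 1 \cdot n^{-1/2} = \sqrt{n},
\end{equation*}
which is \eqref{eq:stability-sobolev-uniform}.

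There is no real obstacle here since the hard work (the $L^2$ bound on Lagrange functions from \cite{DeMarchiSchaback2010} and its transfer to individual weights) has already been carried out in Theorem~\ref{thm:bound-weight}; this corollary is just an aggregation step. The only minor point worth checking is that the implicit constant in Theorem~\ref{thm:bound-weight} is indeed uniform in $i$, which is clear from its proof since the bound \eqref{eq:lagrange-L2-bound} from \cite[Theorem~1]{DeMarchiSchaback2010} holds with a constant that does not depend on the specific Lagrange function $u_{X_n,i}$.
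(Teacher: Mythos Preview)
Your proposal is correct and matches the paper's approach: the paper presents this result explicitly as ``a direct corollary of Theorem~\ref{thm:bound-weight}'' without a separate proof, and your argument---summing the uniform-in-$i$ bound~\eqref{eq:bound-abs-weight} over the $n$ indices and then simplifying under quasi-uniformity---is precisely the intended derivation.
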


While the bounds of Corollary~\ref{cor:stability} are somewhat conservative (as will be demonstrated in Section \ref{sec:stability-examples}), they are still useful in understanding the factors affecting stability and robustness of Bayesian quadrature. 
That is, inequality \eqref{eq:stability-sobolev} shows that the stability constant can be made small if the ratio $h_{X_n,\Omega} / q_{X_n}$ is kept small; this is possible if the point set is sufficiently uniform.

Another important observation concerns the exponent ${r-d/2}$ of the ratio $h_{X_n,\Omega} / q_{X_n}$: if the smoothness $r$ of the kernel is large, then the stability constant may also become large if the points are not quasi-uniform.
This is true because $h_{X_n,\Omega} / q_{X_n} \geq 1$ for any configuration of $X_n$, as can be seen easily from the definitions of $q_{X_n}$ and $h_{X_n,\Omega}$.
This observation implies that the use of a smoother kernel may lead to higher numerical instability.
Accordingly, we next discuss stability of infinitely smooth kernels and the Runge phenomenon that manifests itself in this setting.

\begin{figure*}[th!]
\centering
  \includegraphics{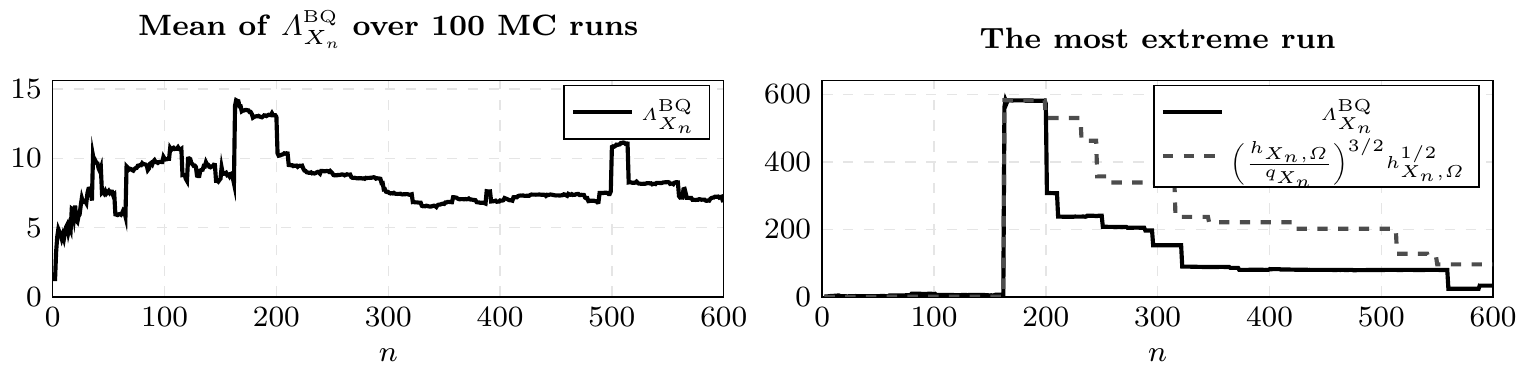}
  \caption{The Bayesian quadrature stability constant for a Mat\'{e}rn kernel, the uniform measure on $[0,1]$ and $n$ points drawn from the uniform distribution. \emph{Left}: $\Lambda_{X_n}^\textsm{BQ}$ averaged over 100 independent Monte Carlo runs. \emph{Right}: the run where most extreme behaviour, in terms of $\Lambda_{X_n}^\textsm{BQ}$ attaining maximal value, was observed. Plotted are both $\Lambda_{X_n}^\textsm{BQ}$ and a scaled version of $(h_{X_n,\Omega}/q_{X_n})^{3/2} h_{X_n,\Omega}^{1/2}$ (its true maximum was roughly $2.2 \times 10^7$) that is expected to control the stability constant. Note that the theoretical upper bound~\eqref{eq:stability-sobolev} contains an additional multiplication by $n$.}\label{fig:matern-random}
\end{figure*}

\subsection{On infinitely smooth kernels} \label{sec:Runge-phenomenon}

While the theoretical results of this section only concern kernels of finite smoothness, we make a few remarks on the stability of Bayesian quadrature when using infinitely smooth kernels, such as the Gaussian kernel. 
When using such a kernel, Bayesian quadrature rules suffer from the famous {\em Runge phenomenon}: if equispaced points are used, then Lebesgue constants and the stability constants grow rapidly; see ~\cite[Section 4.3]{Oettershagen2017} and \cite{Platte2005,Platte2011}. 
This effect is demonstrated in Fig. \ref{fig:stability-gauss}, \rev{and can be seen also in~\cite[Table~1]{SommarivaVianello2006b}}.

A key point is that Runge phenomenon typically occurs when the design points are quasi-uniform (e.g., equispaced).
This means that quasi-uniformity of the points does not ensure stability of Bayesian quadrature when the kernel is infinitely smooth. Care has to be taken if a numerically stable Bayesian quadrature rule is to be constructed with such a kernel. One possibility is to use locally optimal design points from Section \ref{sec:positivity-optimal}. Corollary \ref{cor:optimal-weights-sum} then guarantees uniform boundedness of the stability constant, at least when $d=1$.

\subsection{A numerical example} \label{sec:stability-examples}

Numerical examples of the behaviour of kernel Lebesgue constants can be found in~\cite{DeMarchiSchaback2008}, where it was observed that the theoretical bounds similar to~\eqref{eq:stability-sobolev-uniform} are conservative: the Lebesgue constant appears to remain uniformly bounded. Bayesian quadrature weights are no different. We experimented with the Mat\'{e}rn kernel
\begin{equation*}
k_{3/2}(x,x') = \bigg(1 + \frac{\sqrt{3}\abs[0]{x-x'}}{\ell} \bigg) \exp\bigg( -\frac{\sqrt{3}\abs[0]{x-x'}}{\ell}\bigg)
\end{equation*}
with length-scale $\ell = 0.5$ and the uniform measure on the interval $[0,1]$. When uniformly spaced points were used, all weights remained positive and their sum quickly converged to one when $n$ was increased. In contrast, Corollary \ref{cor:stability} provides the, up to a constant, upper bound $\sqrt{n}$ that is in this case clearly very conservative. 
When points were drawn from the uniform distribution on $[0,1]$, more interesting behaviour was observed (Fig. \ref{fig:matern-random}). As expected, the magnitude of $\Lambda_{X_n}^\textsm{BQ}$ was closely related to the ratio $h_{X_n,\Omega}/q_{X_n}$. Nevertheless, increase in $n$ did not generally correspond to increase in $\Lambda_{X_n}^\textsm{BQ}$.

\rev{Note that the results of Section~\ref{sec:positivity} do not explain why the weights became positive in this experiment, because Matérn kernels do not appear to be totally positive \emph{even if the differentiability requirements were to be relaxed and only single zeroes counted} (recall Remark~\ref{remark:chebyshev}).
We have numerically observed that selecting $n > \rho + 1/2$ and point sets such that $\max X < \min Y$ makes the matrix $\b{K}_{Y,X}$ discussed in Section~\ref{sec:totally-positive} singular for the Mat\'ern kernel $k_\rho$.
This implies that there is a non-trivial linear combination of the $n$ Mat\'ern translates at $X$ that vanishes at more than $n-1$ points. 
When $\rho = 1/2$ and $\ell = 1$ (so that $k_{\rho}(x,x') = \mathrm{e}^{-\abs[0]{x-y}}$), an analytical counterexample can be constructed by setting $X = \{x_1, x_2\}$ and $Y = \{x_1 + h, x_2 + h\}$ with $h > x_2 - x_1$. Then
\begin{equation*}
    \b{K}_{Y,X} = \mathrm{e}^{-h} \begin{bmatrix} 1 & \mathrm{e}^{-(x_2 - x_1)} \\ \mathrm{e}^{-(x_1-x_2)} & 1 \end{bmatrix},
\end{equation*}
which is not invertible because multiplying the first row by $\mathrm{e}^{-(x_1-x_2)}$ yields the second row.
Therefore the positivity of quadrature weights for Mat\'erns and other kernels with finite smoothness requires a further research.
}

\begin{acknowledgements}
TK was supported by the Aalto ELEC Doctoral School.
MK acknowledges support by the European Research Council (StG Project PANAMA).
SS was supported by the Academy of Finland project 313708.

This material was developed, in part, at the \textit{Prob Num 2018} workshop hosted by the Lloyd's Register Foundation programme on Data-Centric Engineering at the Alan Turing Institute, UK, and supported by the National Science Foundation, USA, under Grant DMS-1127914 to the Statistical and Applied Mathematical Sciences Institute. 
Any opinions, findings, conclusions or recommendations expressed in this material are those of the author(s) and do not necessarily reflect the views of the above-named funding bodies and research institutions.
\end{acknowledgements}

% BibTeX users please use one of
%\bibliographystyle{spbasic}      % basic style, author-year citations
\bibliographystyle{spmpsci}      % mathematics and physical sciences
%\bibliographystyle{spphys}       % APS-like style for physics
%\bibliography{references}   % name your BibTeX data base

\providecommand{\BIBYu}{Yu}

\end{document}